\newcommand\mymatrixbraceoffseth{0.9em}
\newcommand\mymatrixbraceoffsetv{0.5em}
\newcommand*\mymatrixbraceright[4][m]{
    \draw[mymatrixbrace, black!70] ($(#1.north east)!(#1-#2-1.north east)!(#1.south east)+(\mymatrixbraceoffseth,-2pt)$)
        -- node[right=2pt] {#4} 
        ($(#1.north east)!(#1-#3-1.south east)!(#1.south east)+(\mymatrixbraceoffseth,-2pt)$);
}
\newcommand*\mymatrixbracetop[4][m]{
    \draw[mymatrixbrace, black!70] ($(#1.north west)!(#1-1-#2.north west)!(#1.north east)+(-9pt,\mymatrixbraceoffsetv)$)
        -- node[above=2pt] {#4} 
        ($(#1.north west)!(#1-1-#3.north east)!(#1.north east)+(7pt,\mymatrixbraceoffsetv)$);
}
\newcommand*\mymatrixbracetopalt[4][m]{
    \draw[mymatrixbrace, black!70] ($(#1.north west)!(#1-1-#2.north west)!(#1.north east)+(-16pt,\mymatrixbraceoffsetv)$)
        -- node[above=2pt] {#4} 
        ($(#1.north west)!(#1-1-#3.north east)!(#1.north east)+(5pt,\mymatrixbraceoffsetv)$);
}
\newtheoremstyle{Def}
{0.3 cm} 
{0.3 cm} 
{\normalfont} 
{} 
{\itshape} 
{.} 
{0.25cm} 
{}
\newtheoremstyle{lemma}
{0.3 cm} 
{0.3 cm} 
{\normalfont} 
{} 
{\bfseries} 
{.} 
{0.25cm} 
{}
\theoremstyle{lemma}
\newtheorem*{rep@theorem}{\rep@title}
\newcommand{\newreptheorem}[2]{%
	\newenvironment{rep#1}[1]{%
		\def\rep@title{#2 \ref{##1}}%
		\begin{rep@theorem}}%
		{\end{rep@theorem}}}
\newtheorem{lemma}{Lemma}[section]
\newtheorem{fact}[lemma]{Fact}
\newtheorem{theorem}[lemma]{Theorem}
\newtheorem{proposition}[lemma]{Proposition}
\newtheorem{corollary}[lemma]{Corollary}
\newtheorem{definition}[lemma]{Definition}
\theoremstyle{Def}
\newtheorem{notation}[lemma]{Notation}
\newtheorem{observation}[lemma]{Observation}
\newtheorem{example}[lemma]{Example}
\newtheorem{remark}[lemma]{Remark}
\newtheorem{convention}[lemma]{Convention}
\DeclareMathOperator{\Ass}{Ass}
\newcommand{\N}{\mathbb{N}}
\newcommand{\Z}{\mathbb{Z}}
\DeclareMathOperator{\Span}{span}
\DeclareMathOperator{\cone}{cone}
\DeclareMathOperator{\conv}{conv}
\newcommand{\fm}{\mathfrak{m}}
\newcommand{\fp}{\mathfrak{p}}
\newcommand{\cpi}[1]{\mathsf{B}_{\supseteq}^{#1}}
\newcommand{\persind}[1]{\mathsf{B}_{\subseteq}^{#1}}
\newcommand{\stabind}[1]{\mathsf{B}_{=}^{#1}}
\newcommand{\dred}{d_{\operatorname{red}}}
\newcommand{\longmid}{\,\,\middle\vert\,\,}
\renewcommand{\vec}[1]{\boldsymbol{#1}}
\newcommand{\veca}{\vec{a}}
\newcommand{\sat}[1][I]{\operatorname{sat}(#1)}
\newcommand{\indi}[1]{#1_{I^n:\fm}}
\newcommand{\indii}[1]{#1_{\sat[I^n]}}
\newcommand{\transpose}{\mathsf{T}}
\newcommand{\Sb}{\mathfrak{S}_{\vec{b}}}
\newcommand{\SO}{\mathfrak{S}_{\vec{0}}}
\author{Clemens Heuberger}
\address{Institut für Mathematik\\Alpen-Adria-Universität Klagenfurt\\
  Universitätsstraße 65-67\\9020 Klagenfurt am Wörthersee\\Austria}
\email{\href{mailto:clemens.heuberger@aau.at}{clemens.heuberger@aau.at}}
\author{Jutta Rath}
\address{Institut für Mathematik\\Alpen-Adria-Universität Klagenfurt\\
  Universitätsstraße 65-67\\9020 Klagenfurt am Wörthersee\\Austria}
\email{\href{mailto:jutta.rath@aau.at}{jutta.rath@aau.at}}
\author{Roswitha Rissner}
\address{Institut für Mathematik\\Alpen-Adria-Universität Klagenfurt\\
  Universitätsstraße 65-67\\9020 Klagenfurt am Wörthersee\\Austria}
\email{\href{mailto:roswitha.rissner@aau.at}{roswitha.rissner@aau.at}}
\thanks{This research was funded in part by the Austrian
  Science Fund (FWF) [10.55776/DOC78]. For open access purposes, the
  authors have applied a CC BY public copyright license to any
  author-accepted manuscript version arising from this submission.}
\title[Bounding the copersistence index]{Stabilization of associated prime ideals of monomial ideals -- Bounding the copersistence index}
\keywords{associated prime ideals, monomial ideals, stability index, copersistence index}
\subjclass[2020]{13F20, 16W50, 13A02, 90C10, 13B25, 13E05}
\begin{document}

\begin{abstract}
  The sequence $(\Ass(R/I^n))_{n\in\N}$ of associated primes of powers
  of a monomial ideal $I$ in a polynomial ring $R$ eventually stabilizes
  by a known result by Markus Brodmann. Lê Tuân Hoa
  gives an upper bound for the index where the stabilization
  occurs. This bound depends on the generators of the ideal and is
  obtained by separately bounding the powers of $I$ after which said
  sequence is non-decreasing and non-increasing, respectively.  In
  this paper, we focus on the latter and call the smallest such number
  the copersistence index.  We take up the proof idea of Lê Tuân Hoa,
  who exploits a certain system of inequalities
  whose solution sets store information about the associated primes of
  powers of~$I$. However, these proofs are entangled with a specific
  choice for the system of inequalities.  In contrast to that, we present
  a generic ansatz to obtain an upper bound for the copersistence
  index that is uncoupled from this choice of the system. We establish
  properties for a system of inequalities to be eligible for this
  approach to work.  We construct two suitable inequality systems to
  demonstrate how this ansatz yields upper bounds for the
  copersistence index and compare them with Hoa's.
  One of the two systems leads to an improvement of the bound by an exponential factor.
\end{abstract}

\maketitle

\section{Introduction and Overview}
The set $\Ass(R/I)$ of associated primes gives insight into the structure of an ideal $I$ in a Noetherian ring $R$. For example, the associated primes of an integer ideal $n\Z$ are generated by the prime factors of $n$, and the associated primes of an algebraic variety correspond to its irreducible components (cf.~\cite[Section~3.8]{Eisenbud:2004:commutativealgebra}). For edge ideals of simple graphs, the associated primes correspond to minimal vertex covers (cf.~\cite[Lemma~2.13]{Carlini-Ha-Harbourne-Tuyl:powers-of-ideals}). The cover ideal is the Alexander dual of the edge ideal, and its associated primes correspond to the edges of the graph (cf.~\cite[Lemma~2.12]{Carlini-Ha-Harbourne-Tuyl:powers-of-ideals}). For finite simple hypergraphs, there is an intrinsic relation between the associated primes of powers of the cover ideal and coloring properties of the underlying graph~\cite{Francisco-Ha-VanTuyl:2011:assPrimes-perfectgraphs}.

The changes of the sequence $(\Ass(R/I^n))_{n\in\N}$ in $n$ are being studied over the last few decades for various classes of ideals. 
For some classes of ideals, the sequence of sets is known to be growing, e.g.,~for edge ideals of simple, undirected graphs~\cite[Theorem~2.15]{MartinezBernal-Morey-Villarreal:2012:assPrimes-edge}, for cover ideals of perfect graphs~\cite[Corollary~5.11]{Francisco-Ha-VanTuyl:2011:assPrimes-perfectgraphs}, or for ideals whose powers are all integrally closed~\cite[Theorem~2.4]{Ratliff:1984:asymptotic-prime-divs}. However, even for square-free monomial ideals, this so-called persistence property does not hold in general~\cite[Theorem~11]{Kaiser-Stehlik-Skrekovski:2014:persistence}. Sarah Weinstein and Irena Swanson~\cite[Theorem~3.9]{Weinstein-Swanson:2020:decay} construct families of monomial ideals whose sets of associated prime ideals decrease in $n$.
There are examples known where $(\Ass(R/I^n))_{n\in\N}$ is not even monotone (cf.~\cite[p.~80]{MCADAM197971}).
In general, even for monomial ideals, only little is known about the changes of $\Ass(R/I^n)$ in $n$.

Despite that, the asymptotic behavior of this sequence is well understood: In 1979, Markus Brodmann~\cite{Brodmann:1979:asympstab} proved that the sequence of associated primes of powers of an ideal stabilizes, meaning that for large $n\in\N$
\begin{equation*}
\Ass(R/I^n)=\Ass(R/I^{n+1})
\end{equation*}
holds.
The smallest integer $\stabind{I}$ such that $\Ass(R/I^n)=\Ass(R/I^{\stabind{I}})$ for all $n\ge \stabind{I}$ is called the \textbf{stability index}, and the set $\Ass(R/I^{\stabind{I}})$ is called the \textbf{stable set} of $I$.

One question arising is whether there exists a universal bound for the stability index.
Lê Tuân Hoa~\cite{Hoa:2006:stab-assprimes-monom} gives an upper bound for the stability index of monomial ideals 
depending on the number of variables, the number of generators of the ideal, and their maximal total degree.
In the same publication, examples of families of ideals demonstrate the dependence of any such bound on the number of variables and the degrees of the generators. 

In general, this bound is very large; for example, for the ideal $I=(XY,YZ)$ in $K[X,Y,Z]$, it is greater than $8\cdot 10^7$, whereas the actual stability index is $1$, cf.~\cite[Example~2.17]{Carlini-Ha-Harbourne-Tuyl:powers-of-ideals}.
For some classes of monomial ideals, tighter bounds are known.
Jürgen~Herzog even conjectured that, if $r$ denotes the number of variables, then for square-free monomial ideals, this bound can be reduced to $r-1$, cf.~\cite[Section~2.3]{Carlini-Ha-Harbourne-Tuyl:powers-of-ideals}.
A lot of research in that area focuses on edge and cover ideals of graphs; see for example~\cite{Chen-Janet-Morey:stable-set-graph, Francisco-Ha-VanTuyl:2011:assPrimes-perfectgraphs, Lam-2019, MartinezBernal-Morey-Villarreal:2012:assPrimes-edge, simis-ideal-theory-graphs, Terai-Trung:2014:associated-primes-depth}. Also other classes of ideals have been studied over the last decades,  cf.~\cite{HERZOG2015530, Herzog-Rauf-Vladoiu:2013:polynomatroidal, Khashyarmanesh-Nasernejad-2014:stab-set, Trung:2009:stab-assoc-primes}.

Lê Tuân Hoa~\cite{Hoa:2006:stab-assprimes-monom} establishes said upper
bound for the stability index of monomial ideals by separately bounding the
power after which the sequence $(\Ass(R/I^n))_{n\in\N}$ is
non-increasing and non-decreasing, respectively. 
We call $\persind{I}$ the \textbf{persistence index} of $I$ if
$\persind{I}$ is the smallest integer such that
\begin{equation*}
\Ass(R/I^n)\subseteq\Ass(R/I^{n+1}) \text{ for all } n\ge \persind{I}.
\end{equation*}
An ideal where $\persind{I}=1$ is said to fulfill the \textbf{persistence property}.
For instance, edge ideals of finite simple graphs~\cite{MartinezBernal-Morey-Villarreal:2012:assPrimes-edge} and cover ideals of perfect graphs~\cite{Francisco-Ha-VanTuyl:2011:assPrimes-perfectgraphs} satisfy the persistence property. 
We call $\cpi{I}$ the \textbf{copersistence index} of $I$ if
$\cpi{I}$ is the smallest integer such that
\begin{equation*}
\Ass(R/I^n)\supseteq\Ass(R/I^{n+1}) \text{ for all } n\ge \cpi{I}.
\end{equation*}
With these definitions, the stability index is the maximum of the persistence and the copersistence index.
This paper focuses on improving upper bounds for the copersistence index.

\subsection{Results}
We develop a general method to determine upper bounds for the copersistence index $\cpi{I}$ of a monomial ideal $I$ in the polynomial ring $K[X_1,\dots, X_r]$, where $K$ is an arbitrary field. This method is based on describing membership in monomial ideals by suitable systems of linear inequalities.
We set up such a system which leads to a reduction of the existing bound~\cite{Hoa:2006:stab-assprimes-monom} for the copersistence index by an exponential factor
\[\frac1{\sqrt{2r}}\Bigl(\frac{d\sqrt{2}}{\sqrt{d^2+1}}\Bigr)^{rs}\]
where $s$ is the number of generators of the ideal and $d$ is the maximal total degree of these generators, see Proposition~\ref{thm:compare-bounds}; note that $\frac{d\sqrt{2}}{\sqrt{d^2+1}}\ge \sqrt{8/5}>1$.
 
All associated primes of monomial ideals are of the form $\fp(M)\coloneqq(X_i\mid i\in M)$ for some subset $M\subseteq[r]$, where $[r]$ denotes the set $\{1,\dots, r\}$ (Fact~\ref{fact:associated-primes-of-monomial-ideals}).
Since the sequence $(\Ass(R/I^n))_{n\in \N}$ eventually stabilizes, ``copersistence behavior'' occurs for each prime ideal $\fp(M)$ at some point.
In other words, for every $M\subseteq[r]$ there exists an integer $N$ such that, if $\fp(M)$ is not associated to $I^n$ for some $n\ge N$, then $\fp(M)$ is also not associated to any higher power.
We denote the smallest such integer by $\cpi{I}(M)$ (Definition~\ref{definition:B1(m)}) and determine upper bounds.

For that purpose, we use systems of linear inequalities $A\vec{x}\le \vec{b}$ that describe to which powers of $I$ the ideal $\fp(M)$ is associated.
From the solutions of the system, we derive, what we call, the $n$\textbf{-th solution spaces} $H_{\vec{0},n}$ and $H_{\vec{b},n}$ (Definition~\ref{def:H,U}), which are groups that store the information whether $\fp(M)$ is associated to~$I^n$. 

In Theorem~\ref{theorem:bounds-sigma(m)}, we show how such systems of linear inequalities can be used to give a bound for~$\cpi{I}(M)$ depending on $\Delta(A\mid \vec{b})$, the maximal absolute value of the subdeterminants of the augmented matrix $(A\mid \vec{b})$.

\begin{reptheorem}{theorem:bounds-sigma(m)}
	Let $I$ be a monomial ideal in $K[X_1,\dots, X_r]$ and $M\subseteq[r]$. Further, let
$A\in\Z^{m\times\nu}$ and $\vec{b}\in\Z_{\ge0}^m$ such that for every $n\in\N$ the associated $n$-th solution spaces $H_{\vec{0},n}$ and $H_{\vec{b},n}$ (Definition~\ref{def:H,U})
fulfill

\begin{enumerate}
\item\label{condition:m-ass-equivalence} $\fp(M)\in\Ass(R/I^n)$ if and
  only if $H_{\vec{b},n}/H_{\vec{0},n}\neq 0$, and
\item\label{condition:HnHm} for all $n_1$, $n_{2}\in\N$ we have
  $H_{\vec{0},n_1}H_{\vec{0},n_2}=H_{\vec{0},n_1+n_2}$.
\end{enumerate}
Then $\cpi{I}(M)\le \Delta(A\mid \vec{b})(\nu+1)$.

\end{reptheorem}

It is well-known that associated primes of ideals behave well with respect to localization (Fact~\ref{fact:localization}). This allows us to focus on maximal ideals (Observation~\ref{observation:p(M)}). 

The obtained bound for $\cpi{I}([r])$ from Theorem~\ref{theorem:bounds-sigma(m)} clearly depends on the system $A\vec{x}\le\vec{b}$. 
For certain systems it is, however, possible to further estimate $\cpi{I}([r])$ by a new bound $\sigma(d,s,r)$ that only depends on the number of variables $r$, the number $s$ of generators of $I$, and their maximal total degree $d$. Indeed, as the next proposition states, whenever such a function $\sigma$ exists that is non-decreasing in all variables, this yields a bound not only for $\cpi{I}([r])$, but for the copersistence index $\cpi{I}$. 

\begin{repproposition}{proposition:sigma}
	Let $\sigma\colon\N^3\to\N$ be a map that is non-decreasing in all three
variables such that for all $d$, $s$, $r\in\N$ the inequality $\cpi{I}([r])\le\sigma(d,s,r)$ holds, whenever $I$ is a monomial ideal in $r$ variables, $s$ generators, and whose minimal generators have total degree at most $d$.	
Then $\sigma(d,s,r)$ is an upper bound for the copersistence index
$\cpi{I}$ of every such ideal $I$.

\end{repproposition}

From a methodical point of view, Theorem~\ref{theorem:bounds-sigma(m)} and Proposition~\ref{proposition:sigma} are the core results of this paper. Their combination provides a generic ansatz to obtain an upper bound for the copersistence index via inequality systems. 

We emphasize that this methodological approach is based on the proof idea pursued by Lê Tuân Hoa~\cite{Hoa:2006:stab-assprimes-monom}. There, however, the proofs are entangled with a specific choice of a system of inequalities. The benefit of Theorem~\ref{theorem:bounds-sigma(m)} combined with Proposition~\ref{proposition:sigma} is that this ansatz is uncoupled from the choice of the system in the sense that it only requires specified system properties. 
It provides the possibility to determine different upper bounds for the copersistence index by choosing different suitable systems of inequalities.  This is convenient, as the structure of the system influences the estimate on $\Delta(A\mid \vec{b})$. The ansatz also lays the foundation for future work to derive tailored upper bounds for the copersistence index of a specific ideal or family of ideals.
Depending on the chosen inequality system, this approach potentially provides sharper estimates on $\Delta(A\mid \vec{b})$.

We apply this ansatz to obtain an upper bound for $\cpi{I}$ for a general monomial ideal $I$.

\begin{reptheorem}{theorem:bound-B1}
	Let $I$ be a monomial ideal in the
ring $K[X_1,\dots, X_r]$ with $s$ generators, reduced maximal degree $\dred$
(Definition~\ref{def:d-red}), and
\begin{equation*}
  \sigma_2(d,s,r)\coloneqq(\sqrt{d^2+1})^{r s}\left(\sqrt{r}\right)^{r+2}(r s+r+2). 
\end{equation*}
	
Then $\cpi{I} \le \sigma_2(\dred,s,\min\{r,s\})$ holds.

\end{reptheorem}

Finally, we compare the bound from Theorem~\ref{theorem:bound-B1} with the original result of Lê Tuân Hoa~\cite{Hoa:2006:stab-assprimes-monom} who proves that 
\begin{equation*}
	\sigma_1(d,s,r) \coloneqq d(rs+s+d)(\sqrt{r})^{r+1}(\sqrt{2}d)^{(r+1)(s-1)}
\end{equation*}
is an upper bound for $\cpi{I}$.

\begin{repproposition}{thm:compare-bounds}
	Let $2\le r \le s$ and $d\ge 2$. Then
\begin{equation*}
	\sigma_2(d,s,r)<\frac{q(d)^{rs}}{\sqrt{2r}}\cdot\sigma_{2}(d,s,r)\le\sigma_{1}(d,s,r)
\end{equation*}
holds, where $q(d)\coloneqq\frac{d\sqrt{2}}{\sqrt{d^2+1}}>1$.

\end{repproposition}

We conclude the introduction with a short overview on the structure of this paper. In Section~\ref{section:preliminaries} we introduce preliminary material on associated primes of monomial ideals, and give equivalent conditions for the maximal monomial ideal to be associated to $I^n$ (Remark~\ref{remark:characterization-m-associated}).
As outlined above, we translate these conditions into the language of systems of linear inequalities.
We want to distinguish between general results about solution spaces of such systems and their applications to our primary question on the copersistence index.
In Section~\ref{sec:bounds} we develop the methodological results on the former, whereas in Section~\ref{section:upper-bound} we return to our original problem, and translate the abstract results into concrete statements about upper bounds for the copersistence index.
Finally, in Section~\ref{section:compare-bounds}, we compare the bound obtained in Section~\ref{section:upper-bound} with known results.

\section{Preliminaries: Monomial ideals and their associated primes}\label{section:preliminaries}
We introduce some terminology and facts about monomial ideals and associated primes of monomial ideals.
For a thorough introduction we refer to Chapter~1 in Jürgen Herzog's and Takayuki Hibi's textbook~\cite{Herzog-Hibi:2011:monideals} on monomial ideals.
\subsection{Notation and basic facts about monomial ideals}
\begin{convention}
	Throughout this paper, if not explicitly stated otherwise, let $K$ be a field and $R\coloneqq K[X_1,\dots, X_r]$ be the polynomial ring over $K$ in $r$ variables.
\end{convention}
\begin{convention}
	For the set of natural numbers, we use the notation $\N\coloneqq\{1,2,3,\dots\}$, and furthermore $\N_0\coloneqq\{0,1,2,\dots\}$.
	For a natural number $n\in\N$ we sometimes write $[n]\coloneqq\{1,2,\dots,n\}$.
\end{convention}
\begin{notation}\label{notation:a[i]}
	For $\vec{b}=(b_1,\dots, b_r)^{\transpose}\in\N_0^r$, we use the notation
	\begin{equation*}
		X^{\vec{b}}\coloneqq X_1^{b_1}\cdots X_r^{b_r}.
	\end{equation*}
	If an ideal $I$ is generated by monomials $X^{\veca_1}$, \dots, $X^{\veca_s}$ for $\veca_1$, \dots, $\veca_s\in\N_0^r$, we write
	\begin{equation*}
		I = (X^{\veca_1},\dots, X^{\veca_s}).
	\end{equation*}
\end{notation}

Throughout this paper, we focus on \textbf{monomial ideals}, that is, ideals generated by monomials.

The following fact states that a monomial ideal can be fully described by the monomials contained in that ideal, and furthermore, we see that monomial ideals behave nicely under algebraic operations, as listed below.
\begin{fact}[{cf.~\cite[Theorem~1.1.2 and Chapter~1.2]{Herzog-Hibi:2011:monideals}}]\label{fact:monomial-ideals}
	Let $I$, $J\subseteq R$ be monomial ideals.
	
	Then the following properties hold:
	\begin{enumerate}
		\item The set of monomials belonging to $I$ forms a $K$-basis of $I$.
		\item The intersection $I\cap J$ is a monomial ideal.
		\item The product $I\cdot J$ is a monomial ideal. In particular, $I^n$ with $n\in\N$ is a monomial ideal.
		\item\label{property:colon-ideal-is-monomial} The colon ideal $I:J=\{w\in R\mid wJ\subseteq I\}$ is a monomial ideal.
		\item The radical $\sqrt{I}=\{w\in R\mid \exists n\in\N: w^n\in I\}$ is a monomial ideal.
	\end{enumerate}
\end{fact}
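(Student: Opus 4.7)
The plan is to establish part~(1) first, because the other parts follow from it almost mechanically. For part~(1), if $I=(X^{\veca_1},\dots,X^{\veca_s})$ and $f\in I$, I would write $f=\sum_i r_i X^{\veca_i}$ and expand each $r_i$ in monomials; each resulting term is a scalar multiple of a monomial divisible by some $X^{\veca_i}$, hence lies in $I$, so $I$ is spanned over $K$ by the monomials it contains. Linear independence is immediate from the fact that distinct monomials of $R$ are $K$-linearly independent as elements of $R$. The key consequence I want to extract and reuse is: $f\in I$ if and only if every monomial in the support of $f$ belongs to $I$.

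With this consequence, parts~(2) and~(4) are short. For~(2), a monomial expansion of $f\in I\cap J$ has each term in $I$ and in $J$, hence in $I\cap J$, so $I\cap J$ is spanned by its monomials and is therefore a monomial ideal. For~(4), take $f=\sum_{\vecalpha} c_{\vecalpha} X^{\vecalpha}\in I:J$; for each monomial generator $X^{\vec{b}}$ of $J$, the product $fX^{\vec{b}}\in I$ has monomial support $\{X^{\vecalpha+\vec{b}}\}$, each of which must lie in $I$ by~(1); hence $X^{\vecalpha}\cdot X^{\vec{b}}\in I$ for every generator $X^{\vec{b}}$ of $J$, meaning $X^{\vecalpha}\in I:J$, so~(1) applies to $I:J$. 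Part~(3) is even more direct: $I\cdot J$ is generated by the products $X^{\veca_i}X^{\vec{b}_j}$ of the monomial generators of $I$ and $J$, which are themselves monomials; iteration gives that $I^n$ is monomial.

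The only part that needs actual work is~(5). The plan is to show that every monomial in the support of $f\in\sqrt{I}$ again lies in $\sqrt{I}$; once this is done,~(1) shows $\sqrt{I}$ is monomial. I would fix a monomial order on $R$ and induct on the number of terms of $f$. Let $X^{\vec{\beta}}$ be the leading monomial of $f$ with coefficient $c_{\vec{\beta}}\neq 0$; if $f^n\in I$, then the leading term of $f^n$ equals $c_{\vec{\beta}}^n X^{n\vec{\beta}}$, so in particular $X^{n\vec{\beta}}$ appears in the monomial expansion of $f^n$ with a nonzero coefficient. By~(1) we then conclude $X^{n\vec{\beta}}\in I$ and therefore $X^{\vec{\beta}}\in\sqrt{I}$. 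Since $\sqrt{I}$ is an ideal, $f-c_{\vec{\beta}}X^{\vec{\beta}}\in\sqrt{I}$ has strictly fewer terms, and the induction closes. The subtle point, and what I expect to be the main obstacle, is that for a non-monomial polynomial the relation $f^n\in I$ does not straightforwardly descend to the individual terms of $f$; using a term order to isolate a single extremal monomial that survives in $f^n$ without cancellation is the trick that bridges this gap.
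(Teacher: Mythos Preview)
Your proof is correct. Note, however, that the paper does not supply its own proof of this statement: it is recorded as a \emph{Fact} with a citation to Herzog and Hibi's textbook, so there is no in-paper argument to compare against. Your treatment follows the standard route found in that reference---part~(1) via expanding an arbitrary element in terms of the monomial generators, parts~(2)--(4) as immediate corollaries of the support criterion extracted from~(1), and part~(5) by peeling off the leading monomial with respect to a monomial order. The only point worth flagging is that in part~(5) you implicitly use that $K$ is a field (or at least has no nilpotents) when asserting $c_{\vec{\beta}}^n\neq 0$; this is harmless here since the paper's standing convention is that $K$ is a field.
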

We will often implicitly use these properties of monomial ideals without always referring to Fact~\ref{fact:monomial-ideals}.
Especially Property~\eqref{property:colon-ideal-is-monomial} will come in handy many times. In that context, we introduce some more notation for colon ideals.
\begin{notation}
	Let $I$, $J\subseteq R$ be two monomial ideals, and  let $X^{\veca}$ be a monomial in $R$. We write
	\begin{enumerate}
		\item $I:X^{\veca}\coloneqq I:(X^{\veca})=\{w\in R\mid wX^{\veca}\in I\}$, and
		\item $I:J^{\infty}\coloneqq\bigcup_{k\in\N_0}(I:J^k)$.
	\end{enumerate}
\end{notation}

\begin{remark}\label{remark:reduction-of-variables}
	Let us choose a variable $X_j$, $j\in[r]$ and consider the ideal $\widetilde{I}\coloneqq I:X_j^{\infty}$.
	Then $\widetilde{I}$ is the ideal generated by the same monomials as $I$, but every $X_j$ is replaced by $1$.
	That holds because a monomial  $u$ is an element of $\widetilde{I}$ if and only if there exists a generator $g$ of $I$ such that $g\mid uX_j^n$ for some $n\in\N_0$. This implies that $u$ is divisible by the monomial that is obtained by replacing $X_j$ by $1$ in $g$. The other inclusion is clear.
	Hence, the ideal $\widetilde{I}$ is obtained by eliminating all the powers of $X_j$ from the generators in $I$.
\end{remark}

\subsection{Observations on associated primes of monomial ideals}
\begin{definition}
	For any ideal $I\subseteq R$, a prime ideal $\mathfrak{p}$ is an \textbf{associated prime} of $I$ if there exists an element $w\in R$ such that $\fp = I:w$.
	The element $w$ is called a \textbf{witness} of $\fp$ with respect to $I$.
	The set of associated primes of $I$ is denoted by $\Ass(R/I)$.
\end{definition}

For monomial ideals, the set of associated primes can be described as follows:
\begin{fact}[cf.~{\cite[Corollary~1.3.10]{Herzog-Hibi:2011:monideals}}]\label{fact:zeugen}
	For a monomial ideal $I$ we have
	\begin{equation*}
	\Ass(R/I)=\{\fp\text{ prime ideal} \mid \text{ there exists }X^{\veca}\in R\text{ such that }\fp=I:X^{\veca}\},
	\end{equation*}
	that is, monomial witnesses always exist.	
\end{fact}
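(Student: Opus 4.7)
The plan is to prove both inclusions separately, making essential use of the monomial structure via Fact~\ref{fact:monomial-ideals}. The ``$\supseteq$'' direction is routine: if $\fp$ is prime with $\fp = I:X^{\veca}$ for some monomial $X^{\veca}\in R$, I would observe that for $w\in R$ the identity $w\cdot \overline{X^{\veca}}=0$ in $R/I$ holds precisely when $wX^{\veca}\in I$, i.e.\ when $w\in I:X^{\veca}=\fp$. Hence $\fp$ is the annihilator of $\overline{X^{\veca}}\in R/I$, which places $\fp$ in $\Ass(R/I)$ by definition.

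For the ``$\subseteq$'' direction, I take $\fp\in\Ass(R/I)$, so $\fp=I:f$ for some $f\in R$, and expand $f=\sum_{i=1}^{k}c_i X^{\vec{b}_i}$ with $c_i\in K\setminus\{0\}$ and pairwise distinct monomials $X^{\vec{b}_1},\dots,X^{\vec{b}_k}$. The central step is to establish
\begin{equation*}
I:f=\bigcap_{i=1}^{k} I:X^{\vec{b}_i}.
\end{equation*}
The inclusion ``$\supseteq$'' is clear, since if $wX^{\vec{b}_i}\in I$ for every $i$, then $wf$ is a $K$-linear combination of elements of $I$. For the reverse inclusion, I use that $I:f$ is monomial by Fact~\ref{fact:monomial-ideals}, so it suffices to test monomial elements: if $w$ is a monomial with $wf\in I$, then $wf=\sum_i c_i(wX^{\vec{b}_i})$ is a $K$-linear combination of pairwise distinct monomials $wX^{\vec{b}_i}$, and since $I$ is monomial, the monomials of $I$ form a $K$-basis (Fact~\ref{fact:monomial-ideals}), forcing each $wX^{\vec{b}_i}\in I$, hence $w\in I:X^{\vec{b}_i}$ for every $i$.

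To conclude, I exploit primeness of $\fp$. From $\fp=\bigcap_{i=1}^{k}(I:X^{\vec{b}_i})$ one reads off both $\prod_{i=1}^{k}(I:X^{\vec{b}_i})\subseteq \fp$ and $\fp\subseteq I:X^{\vec{b}_i}$ for every $i$. Primeness of $\fp$ together with the product containment forces $I:X^{\vec{b}_j}\subseteq \fp$ for some index $j$, which combined with the reverse inclusion yields $\fp=I:X^{\vec{b}_j}$, a colon ideal by a monomial as required. The principal obstacle is the decomposition step: in an arbitrary commutative ring the equality $I:f=\bigcap_i I:X^{\vec{b}_i}$ can fail, and it is precisely the fact that monomial ideals are determined by their monomial content, together with the persistence of the monomial property under taking colons, that makes the reduction to a single monomial witness work.
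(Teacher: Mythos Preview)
The paper does not prove this statement itself---it is recorded as a fact with a citation to Herzog--Hibi---so your argument has to be judged on its own. There is a genuine gap in your ``$\subseteq$'' direction: you claim that $I:f$ is a monomial ideal by Fact~\ref{fact:monomial-ideals}, but that fact only says $I:J$ is monomial when \emph{both} $I$ and $J$ are monomial, and the principal ideal $(f)$ generated by an arbitrary polynomial is not monomial in general. Indeed $I:f$ can fail to be monomial: for $I=(X_1^2,X_2^2)$ and $f=X_1+X_2$ one has $X_1-X_2\in I:f$ (because $(X_1-X_2)(X_1+X_2)=X_1^2-X_2^2\in I$) while $X_1\notin I:f$ (because $X_1X_2\notin I$). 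The same example shows that your displayed identity $I:f=\bigcap_i I:X^{\vec{b}_i}$ is false in general, so the step ``$\fp\subseteq I:X^{\vec{b}_i}$ for every $i$'' is not justified as written.

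What actually makes the argument work is that the associated prime $\fp$ itself is a monomial ideal---equivalently $\fp=(X_i\mid i\in M)$ for some $M$---which one proves, for instance, via the fact that associated primes of $\Z^r$-graded modules are $\Z^r$-graded, or via a monomial primary decomposition. Once that is in hand, each variable generator $X_i\in\fp$ satisfies $X_if\in I$, and since $I$ is monomial every monomial term $X_iX^{\vec{b}_j}$ of $X_if$ lies in $I$, giving $\fp\subseteq I:X^{\vec{b}_j}$ for all $j$; combined with your (correct) prime-avoidance step $I:X^{\vec{b}_{j_0}}\subseteq\fp$ for some $j_0$, this yields $\fp=I:X^{\vec{b}_{j_0}}$. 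In Herzog--Hibi the monomiality of associated primes (their Corollary~1.3.9) is established \emph{before} the present statement, and that is the ingredient your proof is missing rather than Fact~\ref{fact:monomial-ideals}.
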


An immediate corollary of Fact~\ref{fact:zeugen} in combination with Fact~\ref{fact:monomial-ideals} is that all associated primes of monomial ideals are again monomial ideals.
More precisely, we see below (Fact~\ref{fact:associated-primes-of-monomial-ideals}) that they are ideals generated by subsets of the variables.

\begin{notation}\label{notation:I(M),R(M),p(M)}
	Let $M$ be a subset of $[r]$. We use the notation
	\begin{equation*}
		\fp(M)\coloneqq(X_i\mid i\in M)\subseteq R
	\end{equation*}
	for the prime ideal generated by the variables $X_i$, $i\in M$.
	If $M=[r]$ we will denote by
	\begin{equation*}
		\fm_R\coloneqq (X_1,\dots, X_r)=\fp([r])
	\end{equation*} the unique maximal monomial ideal in $R$.
	We simply write $\fm$ if the ring is clear from the context.
\end{notation}
\begin{fact}[cf.~{\cite[Section~1.3]{Herzog-Hibi:2011:monideals}}]\label{fact:associated-primes-of-monomial-ideals}
	Let $I\subseteq K[X_1,\dots,X_r]$ be a monomial ideal and $S=\{i\in [r]\mid X_i\text{ divides a minimal generator of } I\}$. Then all associated primes of $I$ are prime monomial ideals with generators in $S$, that is,
	\begin{equation*}
		\Ass(R/I)\subseteq\{\fp(M)\mid M\subseteq S\}.
	\end{equation*}
\end{fact}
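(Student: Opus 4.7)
The plan is to combine Fact~\ref{fact:zeugen} with the fact that colon ideals of monomial ideals are again monomial (Fact~\ref{fact:monomial-ideals}) to show that every associated prime of a monomial ideal is itself a monomial ideal, and then to establish separately that any prime monomial ideal must in fact be generated by a subset of the variables.

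For the first step, Fact~\ref{fact:zeugen} provides, for each $\fp \in \Ass(R/I)$, a monomial witness $X^{\veca}$ with $\fp = I : (X^{\veca})$. Since both $I$ and $(X^{\veca})$ are monomial ideals, this colon ideal is monomial by Fact~\ref{fact:monomial-ideals}, so $\fp$ is monomial. The actual heart of the proof is therefore the auxiliary claim that any prime monomial ideal $\fp \subseteq R$ has the form $\fp(M)$ for some $M \subseteq [r]$. To establish it, I would pick an arbitrary minimal monomial generator $X^{\veca}$ of $\fp$ and suppose for contradiction that its total degree exceeds one. Then $X^{\veca}$ factors as $X_i \cdot X^{\vec{b}}$ with both factors non-unit monomials, and primality of $\fp$ forces $X_i \in \fp$ or $X^{\vec{b}} \in \fp$. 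Either way, $X^{\veca}$ is properly divisible by an element of $\fp$, contradicting its minimality. Hence every minimal generator of $\fp$ is a single variable $X_i$, and one may take $M \coloneqq \{i \in [r] : X_i \in \fp\}$.

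The argument is short and direct, and I expect no real obstacle, since the statement is a folklore classification of prime monomial ideals. The only mildly subtle point is to ensure that the factor $X^{\vec{b}}$ is a proper divisor of $X^{\veca}$ so that the minimality contradiction is genuine; this is automatic as soon as both factors in the decomposition are non-constant monomials.
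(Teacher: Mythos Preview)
Your argument is correct. The paper itself does not prove this statement: it is recorded as a Fact with a reference to \cite[Corollary~1.3.9]{Herzog-Hibi:2011:monideals} and no proof is supplied, so there is nothing to compare your approach against. Your two-step argument (first using Fact~\ref{fact:zeugen} together with Fact~\ref{fact:monomial-ideals}\eqref{property:colon-ideal-is-monomial} to see that every associated prime is a monomial ideal, then showing by a primality/minimality contradiction that a prime monomial ideal is generated by variables) is the standard elementary proof and is fully valid as a self-contained justification.
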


\begin{notation}\label{notation:localization}
	Let $M\subseteq[r]$. We will denote by
	\begin{equation*}
	R_M\coloneqq  (K[X_i\mid i\notin M]\setminus\{0\})^{-1}R
	\end{equation*}
	the localization of $R$ at $K[X_i\mid i\notin M]\setminus\{0\}$.
	If $I$ is an ideal in $R$, then we write
	\begin{equation*}
	I_M\coloneqq IR_M.
	\end{equation*}
\end{notation}

\begin{remark}\label{remark:localization}
	In contrast to the localization of $R$ at $\fp(M)$, $R_M$ is again a polynomial ring over a (larger) field and has $\lvert M\rvert$ variables, that is, 
	$R_M = K'[X_i\mid i\in M]$ where $K'= K(X_i\mid i\notin M)$.
\end{remark}

\begin{fact}[{cf.~\cite[Theorem~3.1]{Eisenbud:2004:commutativealgebra}}]\label{fact:localization}
	Associated primes of ideals behave well with respect to localization, that is,
	\begin{equation*}
	\Ass(R_M/I_M)=\{\fp R_M\mid\fp\in\Ass(R/I)\text{ and }X_i\notin \fp \text{ for all }i\in [r]\setminus M\}.
	\end{equation*}
\end{fact}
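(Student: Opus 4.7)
The plan is to derive the claimed description of $\Ass(R_M/I_M)$ by combining primary decomposition with the standard bijection between primes of a localization and primes of the base ring disjoint from the localizing set. The result is classical, so my goal is to lay out which inputs from commutative algebra I would assemble and where to be careful.

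First I would set $S\coloneqq K[X_i\mid i\notin M]\setminus\{0\}$ so that $R_M=S^{-1}R$, and recall the prime correspondence: every prime of $R_M$ has the form $\fp R_M$ for a unique prime $\fp\subseteq R$ with $\fp\cap S=\emptyset$. Because a prime $\fp$ of $R$ meets $S$ if and only if $\fp R_M=R_M$, the right-hand side of the stated equality is exactly the image, under prime extension, of the set of $\fp\in\Ass(R/I)$ that survive in $R_M$. So the task reduces to showing that these surviving primes are precisely $\Ass(R_M/I_M)$.

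Next I would invoke Remark~\ref{remark:primary-decomposition}: since $R$ is Noetherian, $I$ admits an irredundant primary decomposition $I=Q_1\cap\dots\cap Q_k$ with $\sqrt{Q_j}=\fp_j\in\Ass(R/I)$. Localization commutes with finite intersections, hence $I_M=(Q_1)_M\cap\dots\cap (Q_k)_M$. A standard computation shows that $(Q_j)_M=R_M$ whenever $\fp_j\cap S\neq\emptyset$, while otherwise $(Q_j)_M$ is $\fp_j R_M$-primary. Discarding the trivial factors produces a primary decomposition of $I_M$ whose radicals are exactly the $\fp_j R_M$ with $\fp_j R_M\neq R_M$. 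Applying Remark~\ref{remark:primary-decomposition} once more to $I_M$ then identifies $\Ass(R_M/I_M)$ with this set, giving the claimed equality.

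The main obstacle is verifying that after dropping the trivial factors the decomposition is still irredundant, since only irredundant decompositions feed into Remark~\ref{remark:primary-decomposition}. Distinctness of the radicals $\fp_j R_M$ is immediate from injectivity of the prime correspondence, so no two surviving components have the same associated prime. To rule out that some surviving $(Q_{j_0})_M$ becomes superfluous, I would use that extension and contraction along $R\to R_M$ are inclusion-preserving and mutually inverse on the family of ideals whose associated primes are disjoint from $S$. Thus any inclusion $(Q_{j_0})_M\supseteq \bigcap_{j\neq j_0,\,\fp_j\cap S=\emptyset}(Q_j)_M$ in $R_M$ would contract to an analogous inclusion in $R$ after intersecting with the remaining $Q_j$ (which are already contained in $Q_{j_0}$'s contraction from $R_M$ for trivial reasons), contradicting irredundancy of the original decomposition. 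This cleanup step is the only place where real care is needed; once it is in hand, the fact follows directly.
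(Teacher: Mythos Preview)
The paper does not prove this statement; it is recorded as a Fact with a reference to \cite[Theorem~3.1]{Eisenbud:2004:commutativealgebra} and used without further argument. Your primary-decomposition approach is a valid and standard way to establish it, though it differs from Eisenbud's proof, which works directly from the annihilator definition of associated primes rather than via a decomposition.

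One small correction in your irredundancy step: the parenthetical claim that the non-surviving $Q_j$ are ``already contained in $Q_{j_0}$'s contraction from $R_M$'' is not right (that contraction is just $Q_{j_0}$, and there is no reason the discarded $Q_j$ sit inside it), and it is also unnecessary. Once contraction gives you
\[
\bigcap_{\substack{j\neq j_0\\ \fp_j\cap S=\emptyset}} Q_j \;\subseteq\; Q_{j_0},
\]
you are done: the full intersection $\bigcap_{j\neq j_0} Q_j$ is contained in the left-hand side (it runs over a larger index set), hence in $Q_{j_0}$, contradicting irredundancy of the original decomposition. With this simplification your argument is clean.
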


\begin{observation}\label{observation:p(M)}
	Let $M\subseteq[r]$. By Fact~\ref{fact:localization} we have
	\begin{equation*}
	\fp (M)\in\Ass(R/I)\quad\Longleftrightarrow\quad \fm_{R_M}=\fp (M)R_M\in\Ass(R_M/I_M).
	\end{equation*}
	This equivalence allows us to focus on the maximal ideal $\fm_R$ only.
	For non-maximal prime ideals $\fp (M)$ we localize to $R_M$ where $\fp (M)R_M$ is maximal.
	To sum up, the following holds:
	\begin{align*}
		\Ass(R/I)
		=\bigcup_{M\subseteq[r]}\big\{\fp (M)\mid\fm_{R_M}\in\Ass(R_M/I_M)\big\}.
	\end{align*}
\end{observation}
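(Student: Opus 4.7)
The plan is to verify the two assertions of the observation in order: first the equivalence, and then the union decomposition.

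First I would pin down the ``by construction'' claim that $\fp(M)R_M \neq R_M$, and in fact that $\fp(M)R_M = \fm_{R_M}$. Using the explicit description $R_M = K'[X_i \mid i \in M]$ from the preceding remark, the extension $\fp(M)R_M$ is generated as an $R_M$-ideal by the variables $X_i$ with $i \in M$, which are precisely the indeterminates of the polynomial ring $R_M$ over $K'$. Hence $\fp(M)R_M$ is the maximal monomial ideal $\fm_{R_M}$ of $R_M$, and in particular a proper ideal. Equivalently, no element of the multiplicative set $K[X_i \mid i \notin M] \setminus \{0\}$ lies in $\fp(M)$, so localization keeps $\fp(M)$ proper.

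With $\fp(M)R_M \neq R_M$ in hand, I would invoke Fact~\ref{fact:localization} applied to the specific prime $\fp = \fp(M)$. That fact characterizes $\Ass(R_M/I_M)$ as those extensions $\fp R_M$ of primes $\fp \in \Ass(R/I)$ for which $\fp R_M \neq R_M$. Since this side condition holds automatically for $\fp(M)$, the fact collapses to the desired equivalence
\begin{equation*}
\fp(M) \in \Ass(R/I) \Longleftrightarrow \fm_{R_M} \in \Ass(R_M/I_M).
\end{equation*}

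For the union formula, I would apply Fact~\ref{fact:associated-primes-of-monomial-ideals}, which forces every element of $\Ass(R/I)$ to be of the form $\fp(M)$ for some $M \subseteq [r]$. Partitioning $\Ass(R/I)$ according to this $M$ and substituting the equivalence from the previous step then produces exactly the stated union. I do not anticipate any substantive obstacle: the observation is essentially a bookkeeping consequence of Facts~\ref{fact:localization} and~\ref{fact:associated-primes-of-monomial-ideals}, together with the explicit structure of $R_M$ as a polynomial ring in the variables indexed by $M$.
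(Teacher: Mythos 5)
Your proposal is correct and follows the same route the paper takes: the observation is justified inline by Fact~\ref{fact:localization} together with Fact~\ref{fact:associated-primes-of-monomial-ideals}, which is exactly what you invoke. Your added remark that $\fp(M)R_M=\fm_{R_M}$ because the $X_i$ with $i\in M$ are precisely the indeterminates of $R_M=K'[X_i\mid i\in M]$ is a correct unpacking of the paper's ``by construction.''
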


\subsection{The copersistence index}
As outlined in the introduction, when considering powers of an ideal, the associated primes of that ideal can change and eventually stabilize.
Recall that we denote the stabilization index of an ideal $I$ by $\stabind{I}$.

Lê Tuân Hoa~\cite{Hoa:2006:stab-assprimes-monom} established an upper
bound for $\stabind{I}$ of monomial ideals by separately bounding the
power after which the sequence $(\Ass(R/I^n))_{n\in\N}$ is
non-increasing and non-decreasing, respectively.
We focus on giving upper bounds for the former.

\begin{definition}\label{def:cpi-persindex}
      For an ideal $I$, we call $\cpi{I}$ the \textbf{copersistence index} of $I$ if
      $\cpi{I}$ is the smallest integer such that
      \begin{equation*}
	\Ass(R/I^n)\supseteq\Ass(R/I^{n+1}) \text{ for all } n\ge \cpi{I}.
      \end{equation*}
\end{definition}

\begin{remark}\label{remark:bound-Hoa}
	Lê Tuân Hoa~\cite{Hoa:2006:stab-assprimes-monom} proved that for a monomial ideal $I$, we have
	\begin{equation*}
		  \cpi{I} \le d(rs+s+d)(\sqrt{r})^{r+1}(\sqrt{2}d)^{(r+1)(s-1)},
    \end{equation*}
     where $r$ is the number of variables, $s$ is the number of generators of $I$ and $d$ is the maximal total degree of a minimal generator of $I$ (see Subsection~\ref{subsec:Un=JnI^n-1}).
     Apart from that, little is known about $\cpi{I}$ in general.
\end{remark}

\begin{example}\label{example:stabind<=2}
	Let $I=(X_1^4, X_1^3X_2, X_1^2X_2^2X_3,X_1X_2^3,X_2^4)\le K[X_1,X_2,X_3]$.
	For a monomial ideal, there is a straightforward way to decide whether it is primary: The ideal is primary if and only if for every variable that divides any of the minimal generators, some power of this variable is an element of the ideal.
	Here, $I$ is not primary, since $X_3\mid X_1^2X_2^2X_3$ but no power of $X_3$ is a generator of $I$.
	However, using the same technique, we see that
	\begin{equation*}
		I^2=(X_1^8, X_1^7X_2, X_1^6X_2^2, X_1^5X_2^3, X_1^4X_2^4, X_1^3X_2^5, X_1^2X_2^6, X_1X_2^7, X_2^8)
	\end{equation*}
	is primary.
	Furthermore, $I^3=(X_1^a X_2^b\mid a+b=12, a\ge 0, b\ge 0)$ is primary as well and with that, every power $I^n$ with $n\ge 2$ is primary.
	Hence $\cpi{I}\le\stabind{I}\le2$, whereas with $r=3$, $s=5$ and $d=5$ the bound for $\cpi{I}$ given in Remark~\ref{remark:bound-Hoa} is $>4\cdot10^{16}$.
	In Section~\ref{section:upper-bound} we develop an upper bound which reduces this by a factor of $10^3$.
\end{example}

\subsection{Notes on the parameters \texorpdfstring{$d$, $s$, and $r$}{d, s, and r} in the bound}\label{section:conditions-on-generators}
First, we show that common divisors of the generators of
$I$ do not play a role for the associated primes of $I^n$. 

\begin{lemma}\label{lemma:verschieben}
	Let $I$ be a monomial ideal in $r>1$ variables and let $\vec{t}\in\N_0^r$ such that $X^{\vec{t}}$ divides all of the generators of $I$.  Then for all $n\in\N$,
	$\fm\in\Ass(R/I^n)$ if and only if $\fm\in\Ass(R/(I:X^{\vec{t}})^n)$.
\end{lemma}
\begin{proof}
	It suffices to show the assertion for $n=1$ since, using that $I^n\cap (X^{n\vec{t}})=I^n$ and the fact that for any $w\in R\setminus\{0\}$ we have $I:w = \frac{1}{w}(I\cap (w))$, the following equation follows:
	\begin{equation*}
	I^n:X^{n{\vec{t}}}=\frac{1}{X^{n{\vec{t}}}}(I^n\cap (X^{n{\vec{t}}}))=\frac{1}{X^{n{\vec{t}}}}I^n=\left(\frac{1}{X^{\vec{t}}}I\right)^n=\left(\frac{1}{X^{\vec{t}}}(I\cap (X^{\vec{t}}))\right)^n=(I:X^{\vec{t}})^n.
	\end{equation*}
	
	If $\fm$ is associated to $I:X^{\vec{t}}$, then there exists a $X^{\vec{w}}$ such that
	\begin{align*}
	\fm = (I:X^{\vec{t}}):X^{\vec{w}}=I:X^{\vec{t}}X^{\vec{w}}=I:X^{\vec{t}+\vec{w}}
	\end{align*}
	holds and therefore $\fm\in\Ass(R/I)$.
	
	Conversely, let $\fm\in\Ass(R/I)$ with witness $X^{\vec{w}}$. Since all generators of $I$ are divisible by $X^{\vec{t}}$ and $X^{\vec{w}}X_i\in I$, we have $\vec{t}\le \vec{w}+\vec{e}_i$ for all $i\in[r]$,  where $\vec{e}_i$ is the $i$-th unit vector. Hence, $\vec{t}\le \vec{w}$ and $X^{\vec{w}}=X^{\vec{\widetilde{w}}}X^{\vec{t}}$ for some~$X^{\vec{\widetilde{w}}}$.
	Therefore,
	\begin{equation*}
	\fm = I:X^{\vec{w}} = I:X^{\vec{t}}X^{\vec{\widetilde{w}}}=(I:X^{\vec{t}}):X^{\vec{\widetilde{w}}},
	\end{equation*}
        i.e., $\fm\in \Ass(R/(I:X^{\vec{t}})$.
\end{proof}

\begin{corollary}
	Let $I$ be a monomial ideal and $X^{\vec{t}}$ be a divisor of all the generators of $I$. Then
	\begin{equation*}
	\Ass(R/I^n)\setminus\{(X_1),\dots,(X_r)\}=\Ass(R/(I:X^{\vec{t}})^n)\setminus\{(X_1),\dots,(X_r)\}.
	\end{equation*}
\end{corollary}
\begin{proof}
	Let $M\subseteq[r]$ with $\lvert M\rvert>1$. We apply Lemma~\ref{lemma:verschieben} to $\fm_{R_M}$ in the localization $R_M$ of $R$ at $\fp(M)$. Observation~\ref{observation:p(M)} then yields that $\fp (M)\in\Ass(R/I^n)$ if and only if $\fp (M)\in\Ass(R/(I:X^{\vec{t}})^n)$.
\end{proof}

\begin{definition}\label{def:d-red}
	Let $d$ denote the maximum total degree of the minimal generators of
	the monomial ideal $I$ and $X^{\vec{t}}$ their greatest common divisor.  We define the
	\textbf{reduced maximal degree} of $I$ as
	\begin{equation*}
	\dred \coloneqq d-\sum_{i=1}^rt_i.
	\end{equation*}
\end{definition}

We close this subsection with two remarks concerning the relation of
associated primes with the number of variables and the number of
minimal generators of the ideal $I$.

\begin{fact}[{\cite[Lemma~2.1]{Nasernejad-Rajaee:2019:detecting-maximal-associated-ideal}}]\label{fact:more-generators-than-variables}
	If the number of generators of a monomial ideal $I$ is smaller than the number of variables, i.e., $s<r$, then $\fm\notin\Ass(R/I^n)$ for all $n\in\N$.
\end{fact}

\begin{remark}
	The stability index of a monomial ideal in a polynomial ring with
	two variables ($r=2$) is equal to 1. This follows
	from~\cite[Theorem~2.7]{Nasernejad-Rajaee:2019:detecting-maximal-associated-ideal},
	which implies that the maximal ideal $\fm$ is either associated to
	no powers or to all powers of said ideal, depending on whether it is
	a principal ideal or not.
\end{remark}

\subsection{Conditions for the maximal ideal to be associated}\label{sec:conditions}
By Observation~\ref{observation:p(M)}, we have $\fp(M)\in\Ass(R/I)$ if and only if $\fm_{R_M}\in\Ass(R_M/I_M)$.
This subsection therefore focuses on maximal ideals. We state all results for the maximal ideal $\fm$ in $R$.

It follows immediately from the definition that the set of all witnesses of $\fm$ is $I:\fm\setminus I$.
This gives the following well-known characterizing statement.
\begin{fact}\label{fact:I:m=I}
	Let $I$ be a monomial ideal.
	Then $\fm\in\Ass(R/I)$ if and only if $I:\fm \neq I$.
\end{fact}

\begin{definition}[{cf.~\cite[Section~15.10.6]{Eisenbud:2004:commutativealgebra}}]\label{def:J}
	For a monomial ideal $I$, let
	\begin{equation*}
	\sat[I]\coloneqq I:\fm^{\infty}=\bigcup_{k\in \N_0} (I:\fm^k)
	\end{equation*}
	be the \textbf{saturation} of $I$ with respect to $\fm$.
\end{definition}
\begin{remark}\label{remark:sat(I)=intersection}
Since $\sat[I]=\bigcap_{i=1}^r (I:X_i^{\infty})$ holds ({cf.~\cite[Lemma~3.5.12]{Kreuzer-Robbiano-2008-comp-comm-algebra}}), $\sat[I]$ is again a monomial ideal.
\end{remark}
\begin{fact}[{\cite[Chapter~4, Exercise~14]{Cox-Little-OShea:2015:ideals-varieties-algorithms}}]\label{prop:equiv-m-Jn}
	Let $I$ be a monomial ideal. Then 
	\begin{equation*}
	\fm\in\Ass(R/I)\quad\Longleftrightarrow\quad \sat[I]\neq I.
	\end{equation*}
\end{fact}

\begin{lemma}\label{lemma:Jn-vs-Jn-cap-I}
	Let $I$ be a monomial ideal. For any $n\in\N$, $\sat[I^n] \neq I^n$ if and only if  $\sat[I^n]\cap I^{n-1}\neq I^n$.
\end{lemma}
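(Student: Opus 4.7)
The plan is to prove the two directions separately; one is essentially formal, and the other is where the real work sits.

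For the ``$\Leftarrow$'' direction I would simply observe that $I^n\subseteq I^{n-1}$ always, so $I^n\subseteq \sat[I^n]\cap I^{n-1}$. Hence if $\sat[I^n]=I^n$, then
\begin{equation*}
  \sat[I^n]\cap I^{n-1}=I^n\cap I^{n-1}=I^n,
\end{equation*}
which is the contrapositive of what we need.

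For the ``$\Rightarrow$'' direction, the idea is to exhibit a concrete monomial witnessing that the intersection strictly contains $I^n$. Assume $\sat[I^n]\neq I^n$. By Corollary~\ref{prop:equiv-m-Jn}, $\fm\in\Ass(R/I^n)$, so Fact~\ref{fact:zeugen} provides a monomial witness $X^{\veca}$ with $\fm=I^n:X^{\veca}$; in particular $X^{\veca}\notin I^n$ (otherwise the colon would be all of $R$) and $X_1X^{\veca}\in I^n$. Moreover $X^{\veca}\in I^n:\fm\subseteq I^n:\fm^{\infty}=\sat[I^n]$. So it only remains to show $X^{\veca}\in I^{n-1}$.

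To see this, write $X_1X^{\veca}=g_1\cdots g_n\cdot w$ with generators $g_1,\dots,g_n$ of $I$ and a monomial $w\in R$. Since $X_1$ is prime, it divides one of the factors on the right. If $X_1\mid w$, then $X^{\veca}=g_1\cdots g_n\cdot(w/X_1)\in I^n$, contradicting the choice of the witness; thus, after relabelling, $X_1\mid g_1$, and then
\begin{equation*}
  X^{\veca}=(g_1/X_1)\cdot w\cdot g_2\cdots g_n\in I^{n-1},
\end{equation*}
since $g_2\cdots g_n\in I^{n-1}$. This places $X^{\veca}$ into $\sat[I^n]\cap I^{n-1}$ while staying outside $I^n$, which gives the strict containment.

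The only subtle point — the ``main obstacle'' — is the step extracting a factor of $X_1$ from some $g_j$ rather than from $w$; once one spots that $X_1\mid w$ would push $X^{\veca}$ back into $I^n$ and contradict the witness property, everything else is bookkeeping. No further properties of $\fm$ or of saturation are needed beyond Corollary~\ref{prop:equiv-m-Jn} and Fact~\ref{fact:zeugen}.
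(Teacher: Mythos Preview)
Your proof is correct and follows essentially the same route as the paper: both directions use the inclusion chain $I^n\subseteq\sat[I^n]\cap I^{n-1}\subseteq\sat[I^n]$ for one implication, and for the other take a monomial in $(I^n:\fm)\setminus I^n$, multiply by a variable, and peel that variable off a generator to land in $I^{n-1}$. Your case analysis ruling out $X_1\mid w$ is actually slightly more explicit than the paper's version, but the argument is the same.
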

\begin{proof}
	Note that the following inclusions hold:
	\begin{equation*}
	I^n\subseteq \sat[I^n]\cap I^{n-1}\subseteq \sat[I^n].
	\end{equation*}
	Therefore, $\sat[I^n]\cap I^{n-1}\neq I^n$ implies $\sat[I^n]\neq I^n$.
	For the reverse implication, note that $I^n = I^n:\fm$ implies $I^n = \sat[I^n]$. Hence, if $I^n\neq\sat[I^n]$ then
	there exists a monomial $X^{\veca}\in (I^n:\fm)\setminus I^n$. Let $i\in[r]$. Since $X^{\veca} X_i\in I^n$, there exist $u\in K$ and generators $g_1$, \dots, $g_n$ of $I$ such that
	\begin{align*}
	X^{\veca} X_i = ug_1\cdots g_n.
	\end{align*}
	This further implies that $X_i\mid g_j$ for some $j$, say $j=1$, and hence
	\begin{align*}
	X^{\veca}=u\frac{g_1}{X_i}g_2\cdots g_n\in I^{n-1}.
	\end{align*}
	Since $I^n:\fm \subseteq \sat[I^n]$, we conclude that $X^{\veca}\in (\sat[I^n]\cap I^{n-1})\setminus I^n$.
\end{proof}

\begin{remark}\label{remark:characterization-m-associated}
	We can now provide a list of statements that characterize when the maximal ideal $\fm$ is associated to $I^n$.
	The following statements are equivalent:
	\begin{enumerate}
		\item $\fm\in\Ass(R/I^n)$,
		\item\label{condition-I:m} $I^n:\fm \neq I^n$ (Fact~\ref{fact:I:m=I}),
		\item\label{condition-J:}  $\sat[I^n] \neq I^n$ (Fact~\ref{prop:equiv-m-Jn}),
		\item\label{condition-JcapI} $\sat[I^n]\cap I^{n-1}\neq I^n$ (Lemma~\ref{lemma:Jn-vs-Jn-cap-I}).
	\end{enumerate}
\end{remark}
Let $U_n$ be one of the three sets $I^n:\fm$, $\sat[I^n]$ or $\sat[I^n]\cap I^{n-1}$.
Note that $I^n\subseteq U_n$ holds in all three cases.
By Remark~\ref{remark:characterization-m-associated}, $\fm\in\Ass(R/I^n)$ if and only if the homogeneous component of degree $n$ of the graded module
\begin{equation*}
	\bigoplus_{i\ge 0}\left(U_i/I^i\right)t^i
\end{equation*}
is nonzero, where $t$ is the grading variable.
In Section~\ref{section:upper-bound} we describe this graded module via solutions of systems of linear inequalities $A\vec{x}\le \vec{b}$.
Before, in Section~\ref{sec:bounds}, we develop theory about sizes of integer solutions of such systems and translate it to the setting of graded modules.
We then use this to get insights on the behavior of $U_n/I^n$ for increasing powers $n\in\N$.

\section{Graded factor modules related to systems of linear inequalities}\label{sec:bounds}
With the overall goal to describe the ideals mentioned in Remark~\ref{remark:characterization-m-associated} via systems of linear inequalities, we start with a more generic set-up and build the connection to said ideals later (Example~\ref{example:I^n} and Section~\ref{section:upper-bound}).
The methods developed here are built on the techniques mentioned by Bruce Fields~\cite[Section~7]{Fields:2002:Lengths-of-Tors} which are further utilized by Lê~Tuân~Hoa~\cite{Hoa:2006:stab-assprimes-monom} to obtain an upper bound for the stability index (Remark~\ref{remark:bound-Hoa}).

\begin{convention}\label{convention:system}
	Throughout this section, $A\vec{x}\le\vec{b}$ denotes a system of (componentwise) inequalities, where $A\in\Z^{m\times\nu}$ and $\vec{b}\in\N_{0}^{m}$.	
\end{convention}
\begin{remark}
	We are interested in non-negative integer solutions of such a system which can always be enforced by the additional constraints $-I_{\nu}\vec{x}\le \vec 0$, where $I_{\nu}$ denotes the $\nu\times\nu$ identity matrix.
	For reasons of readability, however, we omit these additional rows and intersect the solution space with $\N_0^{\nu}$ instead.
\end{remark}

\begin{definition}\label{def:S,Sb}
	Let $A\in\Z^{m\times\nu}$. For any $\vec{b}\in\N_{0}^{m}$, we denote the set of all integer solutions of the system by
	\begin{align*}
	S_{\vec{b}}\coloneqq\{\vec{x}\in\mathbb{N}_0^{\nu}\mid A\vec{x}\le \vec{b}\}.
	\end{align*}
	Furthermore, we define the following subset of the polynomial ring $K[W_1,\dots,W_{\nu}]$:
	\begin{align*}
	\Sb\coloneqq \Span_K\!\left\{W^{\vec{x}}\mid \vec{x}\in S_{\vec{b}}\right\}.
	\end{align*}
\end{definition}

\begin{remark}
	Note that all the sets introduced in Definition~\ref{def:S,Sb} depend on the matrix $A$. However, for the sake of readability, we omit this dependence from the notation.
\end{remark}

\begin{remark}\label{remark:S,Sb}
	We observe that the set $S_{\vec{0}}=\{\vec{x}\in\mathbb{N}_0^{\nu}\mid A\vec{x}\le \vec{0}\}$ is a submonoid of $\N_0^{\nu}$, because $\vec{0}\in S_{\vec{0}}$ and
	for $\vec{x}_1$, $\vec{x}_2\in S_{\vec{0}}$ we have $A(\vec{x}_1+\vec{x}_2)=A\vec{x}_1+A\vec{x}_2\le \vec{0}$, hence
	\begin{equation}\label{eq:S}
	\vec{x}_1+\vec{x}_2\in S_{\vec{0}}.
	\end{equation}
	This implies that $\SO$ is a ring.

	Note that $S_{\vec{0}}\subseteq S_{\vec{b}}$ since $\vec{b}\ge \vec{0}$.
	Furthermore, if $\vec{x}\in S_{\vec{0}}$ and $\vec{y}\in S_{\vec{b}}$, then it follows that $A(\vec{x}+\vec{y})=A\vec{x}+A\vec{y}\le \vec{0}+\vec{b}=\vec{b}$. Hence,
	\begin{equation}\label{eq:S_b}
	\vec{x}+\vec{y}\in S_{\vec{b}}
	\end{equation}
	holds which in turn implies that $\Sb$ is an $\SO$-module.
\end{remark}

\begin{example}\label{example:I^n}
	Let $I=(X^{\veca_1},\dots,X^{\veca_s})$ be a monomial ideal.
	Then 
	\begin{align*}
	I^n&=((X^{\veca_1})^{k_1}\cdots(X^{\veca_{s}})^{k_{s}}\mid k_1,\dots,k_{s}\in\N_0,\,n= k_1+\dots+k_{s}).
	\end{align*}
	We want to set up a system of linear inequalities that describes when a monomial $X^{\vec{h}}$ is an element of $I^n$.
	We have $X^{\vec{h}}\in I^n$ if and only if there are $k_1$, \dots, $k_{s}\in\N_0$ such that $n= k_1+\dots+k_{s}$ and $X^{\vec{h}}$ is divisible by $(X^{\veca_1})^{k_1}\cdots(X^{\veca_{s}})^{k_s}$.
	It suffices to demand that $n\le k_1+\dots+k_s$, since $I^m\subseteq I^n$ for all $m\ge n$.
	So we are looking for a non-negative integer solution of
	\begin{align*}
	a_{1,j}k_1+\dots+a_{s,j}k_s-h_j&\le0,\\
	-(k_1+\dots+k_s)+n &\le 0
	\end{align*}
	for all $j\in[r]$.
	In other words, $X^{\vec{h}}\in I^n$ if and only if there exists $\vec{k}\in\N_0^{s}$ such that
	\begin{center}
		\begin{tikzpicture}[mymatrixenv]
  \matrix[mymatrix] (m)  {
    {}         & {}        &   &      & {}        &{}  &                   &  &{0} \\
               &           &   &      &           &    &                   &  &    \\
    {\vec{a}_1}&{\vec{a}_2}&   &      &{\vec{a}_s}&    &|[scale=1.5]|{-I_r}&  &    \\
               &           &   &      &           &    &                   &  &    \\
    {}         &           &   &      &           &    &                   &  &    \\
    {}         & {}        &   &      & {}        &{}  &                   &{}&{0}\\                
    {-1}       & -1        &   &      & -1        &{0} &                   &0 &{1} \\
  };    
  \matrix[mymatrix, right =0.6cm of m] (m1)  {
    {}\\
    {\vec{k}}  \\
    {}\\[0.4cm]
    {\vec{h}} \\
    {}\\[0.2cm]
    {n}       \\
  };    

  \node[right =0.2 of m1] {$\le \vec{0}.$};

  \draw[dotted,shorten <=0.2cm,shorten >=0.2cm] (m-1-9.south) -- (m-6-9.north);
  \draw[dotted,shorten <=0.2cm,shorten >=0.2cm] (m-7-6.east) -- (m-7-8.west);
  \draw[dotted,shorten <=0.3cm,shorten >=0.3cm] (m-3-2.east) -- (m-3-5.west);
  \draw[dotted,shorten <=0.3cm,shorten >=0.3cm] (m-7-2.east) -- (m-7-5.west);

  \draw[ForestGreen, rounded corners] ($(m1-2-1.north)+(-0.8em,1em)$) rectangle ($(m1-2-1.south)+(0.8em,-1em)$);
  \draw[ForestGreen, rounded corners] ($(m-1-1.north)+(-1em,0.9em)$) rectangle ($(m-7-5)+(0.9em,-0.5em)$);

  \draw[RedViolet, rounded corners] ($(m-1-6.north)+(-0.5em,0.9em)$) rectangle ($(m-7-8.south)+(0.4em,-0.18em)$);
  \draw[RedViolet, rounded corners] ($(m1-4-1.north)+(-0.8em,1em)$) rectangle ($(m1-4-1.south)+(0.8em,-1em)$);

  \draw[RoyalPurple, rounded corners] ($(m-1-9.north)+(-0.7em,0.3em)$) rectangle ($(m-7-9.south)+(0.7em,-0.18em)$);
  \draw[RoyalPurple, rounded corners] ($(m1-6-1.north)+(-0.8em,0.45em)$) rectangle ($(m1-6-1.south)+(0.8em,-0.25em)$);

\end{tikzpicture}

	\end{center}
	Given a solution to this system of linear inequalities, the key information we need—the exponents $\vec{h}$ and the power $n$—is stored in the last $r+1$ components of the solution. For the first $s$ components, only their existence matters, not their exact values.
\end{example}

\begin{definition}
	Let $r\in\N_0$ with $r<\nu$ and $\pi_r\colon\N_0^{\nu}\to\N_0^{r+1}$ be the projection of a $\nu$-tuple onto its last $r+1$ entries, i.e., $\pi_r((x_1,\dots,x_{\nu}))=(x_{\nu-r},\dots,x_{\nu})$.
\end{definition}

\begin{definition}\label{def:H,U}
	Let $A\in\Z^{m\times\nu}$. For any $\vec{b}\in\N_{0}^{m}$, we define
	\begin{align*}
	\mathcal{H}_{\vec{b}}\coloneqq \Span_K\bigl\{X^{\pi_r(\vec{z})}\mid \vec{z}\in S_{\vec{b}}\bigr\}\subseteq K[X_1,\dots, X_{r+1}].
	\end{align*}
	In particular,
	$\mathcal{H}_{\vec{0}}=\Span_K\bigl\{X^{\pi_r(\vec{z})}\mid \vec{z}\in S_{\vec{0}}\bigr\}$.
	By setting $\deg X^{\pi_r(\vec{z})}=\pi_0(\vec{z})$ we impose a grading on $\mathcal{H}_{\vec{b}}$.
	This gives
	\begin{align*}
	\mathcal{H}_{\vec{b}}=\bigoplus_{n\ge 0} H_{\vec{b},n},
	\end{align*}
        i.e.,
        \begin{align*}
	H_{\vec{b}, n}= \Span_K\bigl\{X^{\pi_r(\vec{z})}\mid \vec{z}\in S_{\vec{b}}, \pi_0(\vec{z})=n\bigr\}.
	\end{align*}
	We call $H_{\vec{b}, n}$ and $H_{\vec{0}, n}$ the \textbf{$n$-th solution spaces corresponding to} $A\vec{x}\le \vec{b}$.
\end{definition}

\begin{remark}
	For every $n\in\N_{0}$ the sets $H_{\vec{0},n}$ and $H_{\vec{b},n}$ are additive subgroups of $K[X_1,\dots,X_{r+1}]$ and the following properties hold:
	\begin{enumerate}
		\item $H_{\vec{0},n}\subseteq H_{\vec{b},n}$ since $S_{\vec{0}}\subseteq S_{\vec{b}}$ (Remark~\ref{remark:S,Sb}).
		\item For all $n$, $m\in\N_0$ we have that
		\begin{equation*}
		H_{\vec{0},m}H_{\vec{0},n}\subseteq H_{\vec{0},m+n}.
		\end{equation*}
		holds by Equation~\eqref{eq:S} in Remark~\ref{remark:S,Sb}. Therefore, $\mathcal{H}_{\vec{0}}$ is a graded subring of the graded ring $K[X_1,\dots, X_{r+1}]$ (graded in $X_{r+1}$).
		\item By Equation~\eqref{eq:S_b} in Remark~\ref{remark:S,Sb}
		\begin{equation*}
		H_{\vec{0},n}H_{\vec{b},m}\subseteq H_{\vec{b},n+m}
		\end{equation*}
		holds for all $n$, $m\in\N_0$, hence $\mathcal{H}_{\vec{b}}$ is a graded $\mathcal{H}_{\vec{0}}$-module.
	\end{enumerate}
\end{remark}

\begin{example}
	In Example~\ref{example:I^n} we have $\vec{b}=\vec{0}$ and $H_{\vec{0},n}= I^nX_{r+1}^n$ for all $n\in\N_0$. Therefore
	\begin{equation*}
	\mathcal{H}_{\vec{0}} = \bigoplus_{n\ge0}I^nX_{r+1}^{n},
	\end{equation*}
	which is known as the \textbf{Rees algebra} of $I$.
\end{example}

\subsection{Estimates on the generators of the solution spaces}
Next, we want to investigate the sizes of the generators of $\Sb$ as an $\SO$-module with respect to their maximal entry.
We use the fundamental fact from linear programming that polyhedra can be decomposed into a sum of a finitely generated convex hull and cone (denoted by $\conv$ and $\cone$, respectively).
More precisely, we use known bounds for the entries of their respective generators.
We summarize this in the following fact. For details we refer to the proof of~\cite[Theorem~17.1]{Schrijver:1999:linintprogr}.
\begin{fact}[{\cite[Proof of Theorem~17.1]{Schrijver:1999:linintprogr}\label{thm:schrijver}}]
	Let $A\in \mathbb{Z}^{m\times \nu}$, $\vec{b}\in\mathbb{Z}_{\ge 0}^m$ and $P = \{\vec{x}\in \mathbb{Q}_{\ge 0}^\nu\mid A\vec{x}\le \vec{b}\}$.
	Let $\Delta$ denote the maximum absolute value of the subdeterminants of the matrix $(A\mid \vec{b})$.
	
	Then there exist $\vec{z}_1$, \dots, $\vec{z}_{\ell}\in P$ and $\vec{y}_1$, \dots, $\vec{y}_s\in S_{\vec{0}}$ with all components at most $\Delta$ in absolute value
	such that
	\begin{equation*}
	P = \conv\{\vec{z}_1,\dots, \vec{z}_{\ell}\} + \cone\{\vec{y}_1,\dots, \vec{y}_s\}
	\end{equation*}
	holds.
	
	Moreover, every $\vec{x}\in S_{\vec{b}}$ can be written as $\vec{x} = \widetilde{\vec{x}}+\vec{y}$ with $\vec{y}\in S_{\vec{0}}$ and $\widetilde{\vec{x}}\in M\cap S_{\vec{b}}$, where
	\begin{equation}\label{eq:bounded-generators}
	M=\conv\{\vec{z}_1,\dots, \vec{z}_{\ell}\} + \left\{\sum_{i=1}^{s}\alpha_i\vec{y}_i\mid 0\le\alpha_i<1\text{, at most }\nu\text{ of the }\alpha_i\text{ are nonzero} \right\}.
	\end{equation}
\end{fact}
For our purposes,~\eqref{eq:bounded-generators} is crucial: the set $M$ is bounded because the maximum norm of each of $\vec{z}_1$, \dots, $\vec{z}_{\ell}$ as well as $\vec{y}_1$,
\dots, $\vec{y}_s$ is bounded by $\Delta$; therefore the maximum norm of all vectors
in $M$ is bounded by $\Delta(\nu+1)$. Note that while
$\vec{z}_1$, \dots, $\vec{z}_{\ell}$ might be rational vectors, the set
$M\cap S_{\vec{b}}$ consists of integer vectors by definition.
Rewriting $\vec{x} = \widetilde{\vec{x}}+\vec{y}$ in terms of $\SO$ and $\Sb$ immediately leads to the
following corollary.
\begin{corollary}[cf.~{\cite[Lemma~2.2]{Hoa:2006:stab-assprimes-monom}}]\label{thm:degS-Sb}
	Let $A\vec{x}\le \vec{b}$ be a system as in Convention~\ref{convention:system}.
	
	Then the $\SO$-module $\Sb$ is generated by finitely many monomials all of whose exponents are at most $\sigma\coloneqq \Delta(A\mid \vec{b})(\nu+1)$, where $\Delta(A\mid \vec{b})$ is the maximum absolute value of the subdeterminants of $(A\mid \vec{b})$ and $\nu$ is the number of columns of $A$.
\end{corollary}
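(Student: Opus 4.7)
The plan is to apply Fact~\ref{thm:schrijver} directly to the system $A\vec{x}\le\vec{b}$, producing integer vectors $\vec{x}_1,\dots,\vec{x}_\ell\in S_{\vec{b}}$ and $\vec{y}_1,\dots,\vec{y}_t\in S$ whose components all have absolute value at most $(\nu+1)\Delta(A\mid\vec{b})=\sigma$ and for which
\begin{equation*}
\conv(S_{\vec{b}}) = \conv\{\vec{x}_1,\dots,\vec{x}_\ell\} + \cone\{\vec{y}_1,\dots,\vec{y}_t\}.
\end{equation*}
The non-negativity constraints encoded by the last $\nu$ rows of $A$ force $\vec{x}_i,\vec{y}_j\ge\vec{0}$, so in particular $\deg(W^{\vec{x}_i}) = \max_k(\vec{x}_i)_k\le\sigma$ for every $i$. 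The corollary then reduces to exhibiting a finite family of monomials $W^{\vec{x}}$ with $\vec{x}\in S_{\vec{b}}$ and $\deg(W^{\vec{x}})\le\sigma$ that generates $\langle S_{\vec{b}}\rangle_K$ as an $\langle S\rangle_K$-module.

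For the generating step I would pick an arbitrary $\vec{z}\in S_{\vec{b}}$ and use Fact~\ref{thm:schrijver} to decompose
\begin{equation*}
\vec{z} = \sum_{i=1}^{\ell}\mu_i\vec{x}_i + \sum_{j=1}^{t}\lambda_j\vec{y}_j,\qquad \mu_i,\lambda_j\ge 0,\quad \sum_i\mu_i = 1.
\end{equation*}
Rounding the recession-cone coefficients down, I would set $\vec{u}\coloneqq\sum_j\floor{\lambda_j}\vec{y}_j$, which lies in $S$ by \eqref{eq:S} in Remark~\ref{remark:S,Sb}. The integer remainder $\vec{z}' = \vec{z}-\vec{u} = \sum_i\mu_i\vec{x}_i+\sum_j(\lambda_j-\floor{\lambda_j})\vec{y}_j$ then satisfies $A\vec{z}'\le\vec{b}$ because each $A\vec{x}_i\le\vec{b}$, each $A\vec{y}_j\le\vec{0}$, the $\mu_i$ form a convex combination, and the fractional parts $\lambda_j-\floor{\lambda_j}$ lie in $[0,1)$. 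Non-negativity of $\vec{z}'$ is immediate from the formula, so $\vec{z}'\in S_{\vec{b}}$, and $W^{\vec{z}} = W^{\vec{z}'}\cdot W^{\vec{u}}\in W^{\vec{z}'}\langle S\rangle_K$ exhibits $W^{\vec{z}}$ as an $\langle S\rangle_K$-multiple of a monomial coming from a smaller candidate.

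The main obstacle is pinning down the sharp component bound $\deg(W^{\vec{z}'})\le\sigma$: a naive coordinate-wise estimate on $\vec{z}' = \sum_i\mu_i\vec{x}_i + \sum_j(\lambda_j-\floor{\lambda_j})\vec{y}_j$ yields only $\deg(W^{\vec{z}'})\le\sigma+t\sigma$, since each fractional part still multiplies a potentially large $(\vec{y}_j)_k\le\sigma$. To reach exactly $\sigma$, I would pursue the homogenization trick used by Lê Tuân Hoa: introduce an extra coordinate $\lambda$ and study the homogeneous monoid
\begin{equation*}
\widetilde{S} = \{(\vec{x},\lambda)\in\Z^{\nu+1}\mid A\vec{x}-\vec{b}\lambda\le\vec{0},\ \lambda\ge 0\},
\end{equation*}
whose subdeterminants agree up to sign with those of $(A\mid\vec{b})$. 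A Hilbert basis of $\widetilde{S}$ delivered by the homogeneous form of Fact~\ref{thm:schrijver} decomposes the lifted point $(\vec{z},1)$ as a non-negative integer combination, and integrality in the last coordinate forces exactly one basis element with last component $1$ and multiplicity $1$ to appear, while every remaining contribution must come from basis elements with last component $0$, that is, from $S$. This isolates a single bounded $\vec{x}_i\in S_{\vec{b}}$ with $\vec{z}-\vec{x}_i\in S$ and delivers precisely the claimed bound on $\deg(W^{\vec{x}_i})$.
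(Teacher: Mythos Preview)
Your first two paragraphs track the paper's argument almost verbatim: apply Fact~\ref{thm:schrijver}, split off the integer part $\vec u=\sum_j\lfloor\lambda_j\rfloor\vec y_j\in S$, and write $W^{\vec z}=W^{\vec z'}W^{\vec u}$ with $\vec z'\in S_{\vec b}$. The divergence comes at the degree bound for $\vec z'$, and there you miss the one-line fix the paper actually uses. Instead of homogenising, the paper invokes Carath\'eodory's theorem (\cite[Corollary~7.1i]{Schrijver:1999:linintprogr}) to arrange that at most $\nu$ of the conic coefficients $\lambda_j$ are nonzero. The fractional remainder $\sum_j(\lambda_j-\lfloor\lambda_j\rfloor)\vec y_j$ is then a sum of at most $\nu$ terms, each with coefficient in $[0,1)$, so its contribution to any coordinate is at most $\nu$ times the component bound on the $\vec y_j$; adding the convex part yields $\deg W^{\vec z'}\le\Delta(A\mid\vec b)+\nu\,\Delta(A\mid\vec b)=\sigma$. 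That is the whole trick, and it stays entirely within the cited toolkit.

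Your homogenisation route is a different, heavier idea, and as written it has a gap: Fact~\ref{thm:schrijver} applied to the homogeneous cone $\widetilde S$ produces generators of $\conv(\widetilde S)$ as a \emph{real} cone, not an integral Hilbert basis. The step ``decompose $(\vec z,1)$ as a non-negative \emph{integer} combination of bounded generators'' therefore does not follow from the fact you cite; you would need a separate bound on Hilbert basis elements (such results exist but are not Fact~\ref{thm:schrijver}), and the ambient dimension has grown to $\nu+1$, which has to be reconciled with the target constant. The argument can be made to work, but it is both longer and requires additional input compared to the Carath\'eodory step you overlooked.
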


We use Corollary~\ref{thm:degS-Sb} to get a bound for the degree of the generators of $\mathcal{H}_{\vec{b}}$ as an $\mathcal{H}_{\vec{0}}$-module.
\begin{proposition}\label{prop:generators-of-U}
	Let $A\vec{x}\le \vec{b}$ be as in Convention~\ref{convention:system}.
	Then $\mathcal{H}_{\vec{b}}$ is generated as an $\mathcal{H}_{\vec{0}}$-module by homogeneous elements whose degree is less than or equal to
	\begin{equation*}
	\sigma\coloneqq \Delta(A\mid \vec{b})(\nu + 1),
	\end{equation*}
	where $\Delta(A\mid \vec{b})$ is the maximal absolute value of the subdeterminants of the matrix $(A\mid \vec{b})$.
\end{proposition}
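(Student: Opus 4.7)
My plan is to reduce the proposition directly to Corollary~\ref{thm:degS-Sb}, using a natural projection from the polynomial ring $K[W_1,\dots,W_\nu]$ (in which the inequality-theoretic objects $\langle S\rangle_K$ and $\langle S_{\vec{b}}\rangle_K$ live) down to the ring $K[X_1,\dots,X_r,t]$ (in which $\mathcal{H}$ and $\mathcal{U}$ live). The key observation is that $\mathcal{H}$ and $\mathcal{U}$ are exactly the images of $\langle S\rangle_K$ and $\langle S_{\vec{b}}\rangle_K$ under forgetting the first $k$ coordinates of a solution vector, so a generating statement on the source transfers to a generating statement on the target.

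Concretely, I would define the $K$-algebra homomorphism
\begin{equation*}
\pi\colon K[W_1,\dots,W_\nu]\longrightarrow K[X_1,\dots,X_r,t],\qquad
W_i\mapsto
\begin{cases}
1 & \text{if } 1\le i\le k,\\
X_{i-k} & \text{if } k<i\le k+r,\\
t & \text{if } i=\nu,
\end{cases}
\end{equation*}
and check two things: (i) $\pi$ sends $\langle S\rangle_K$ \emph{onto} $\mathcal{H}$ and $\langle S_{\vec{b}}\rangle_K$ \emph{onto} $\mathcal{U}$, which follows straight from Definition~\ref{def:H,U} and Definition~\ref{def:S,Sb} by matching the coordinates $(\vec{\alpha}^t,\vec{h}^t,n)^t$; and (ii) $\pi$ is $\langle S\rangle_K$--$\mathcal{H}$-equivariant, i.e.\ if $\vec{y}\in S$ and $\vec{x}\in S_{\vec{b}}$, then $\pi(W^{\vec{y}}W^{\vec{x}})=\pi(W^{\vec{y}})\pi(W^{\vec{x}})$ with $\pi(W^{\vec{y}})\in\mathcal{H}$. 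Both are essentially bookkeeping.

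Now I would take the finite set of monomial generators $W^{\vec{x}_1},\dots,W^{\vec{x}_\ell}$ of $\langle S_{\vec{b}}\rangle_K$ as an $\langle S\rangle_K$-module supplied by Corollary~\ref{thm:degS-Sb}, each of total degree at most $\sigma=\Delta(A\mid\vec{b})(\nu+1)$, and claim that their images $\pi(W^{\vec{x}_i})=X^{\vec{h}_i}t^{n_i}$, where $\vec{x}_i=(\vecalpha_i^t,\vec{h}_i^t,n_i)^t$, generate $\mathcal{U}$ as an $\mathcal{H}$-module. Given any monomial $X^{\vec{h}}t^n\in U_n t^n$, by definition there exists $\vecalpha$ with $(\vecalpha^t,\vec{h}^t,n)^t\in S_{\vec{b}}$; applying the generation statement to the monomial $W^{(\vecalpha^t,\vec{h}^t,n)^t}$ gives a single index $i$ and a vector $\vec{y}\in S$ with $W^{(\vecalpha^t,\vec{h}^t,n)^t}=W^{\vec{y}}\cdot W^{\vec{x}_i}$; applying $\pi$ then expresses $X^{\vec{h}}t^n$ as $\pi(W^{\vec{y}})\cdot X^{\vec{h}_i}t^{n_i}$ with $\pi(W^{\vec{y}})\in\mathcal{H}$. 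Since $U_n$ is $K$-spanned by such monomials and $\mathcal{U}$ is the direct sum of the $U_n t^n$, this covers all of $\mathcal{U}$.

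Finally, for the degree bound: the grading on $\mathcal{U}$ is by the $t$-exponent, so $\deg\bigl(\pi(W^{\vec{x}_i})\bigr)=n_i$. But $n_i$ is the last coordinate of $\vec{x}_i$, and the bound from Corollary~\ref{thm:degS-Sb} is on the maximum coordinate, so $n_i\le\max_j (x_i)_j=\deg W^{\vec{x}_i}\le\sigma$. I don't foresee a real obstacle: the work is packaged in Corollary~\ref{thm:degS-Sb}, and the subtle point is merely to verify that ``total degree $\le\sigma$'' in the source (defined as the maximum coordinate in the proof of that corollary) dominates the $t$-degree in the target, which it does trivially because $t$-degree is one specific coordinate.
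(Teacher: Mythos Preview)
Your approach is correct and essentially identical to the paper's: both define the $K$-algebra map $K[W_1,\dots,W_\nu]\to K[X_1,\dots,X_r,t]$ forgetting the first $k$ coordinates, observe that it carries $\langle S\rangle_K$ onto $\mathcal{H}$ and $\langle S_{\vec{b}}\rangle_K$ onto $\mathcal{U}$, and then push the bounded generators from Corollary~\ref{thm:degS-Sb} through this surjection. The paper packages the passage slightly more abstractly (factoring through the quotient by $L=\ker(\langle S\rangle_K\to\mathcal{H})$), whereas you argue directly that images of generators under an equivariant surjection generate; these are the same argument. One cosmetic point: your claim that each monomial $W^{\vec{x}}$ equals $W^{\vec{y}}\cdot W^{\vec{x}_i}$ for a \emph{single} index $i$ is not guaranteed by the statement of Corollary~\ref{thm:degS-Sb} alone (module generation only gives $K$-linear combinations), though it does follow from its proof; either appeal to that proof or simply allow a sum, since $\pi$ is $K$-linear and the conclusion is unaffected.
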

\begin{proof}
	To simplify notation, we write $\pi$ instead of $\pi_r$ within this proof. We restrict the ring epimorphism
	\begin{align*}
	\varphi\colon K[W_1,\dots, W_{\nu}] &\to K[X_1,\dots, X_{r+1}]\\
	W^{\vec{z}} &\mapsto X^{\pi(\vec{z})}
	\end{align*}
	to $\Sb$ resulting in an epimorphism of
	additive groups
	\begin{align*}
	\varphi'\colon \Sb&\to \mathcal{H}_{\vec{b}}\\
	W^{\vec{z}} &\mapsto X^{\pi(\vec{z})}.
	\end{align*}
	A further restriction to the ring $\SO$ results in
	the ring epimorphism
	\begin{equation*}
	\varphi''\colon \SO\to \mathcal{H}_{\vec{0}}.
	\end{equation*}
	Let $L$ be the kernel of $\varphi''$.  Then
	$\varphi(L\Sb) =
	\varphi(L)\varphi(\Sb) = \{0\}$. Therefore,
	$L\Sb$ is a subgroup of the kernel of
	$\varphi'$. In fact, $L\Sb$ is an
	$\SO$-submodule of the kernel of $\varphi'$ and
	$\Sb/L\Sb$ is an
	$\SO/L$-module.
	
	By Corollary~\ref{thm:degS-Sb}, there exist elements
	$\vec{z}_1$, \dots, $\vec{z}_{\ell}$ with $\pi_0(\vec{z}_i)\le \sigma$ for $1\le i\le\ell$ such that
	$\Sb$ is generated as an $\SO$-module
	by $W^{\vec{z}_1}$, \dots, $W^{\vec{z}_{\ell}}$.  Hence
	$\Sb/L\Sb$ is generated as
	an $\SO/L$-module by
	$W^{\vec{z}_1}+L\Sb$, \dots,
	$W^{\vec{z}_{\ell}}+L\Sb$.  By the
	isomorphism induced by $\varphi''$ we have
	\begin{align*}
	\SO/L\simeq \mathcal{H}_{\vec{0}}
	\end{align*}
	and furthermore, since $\varphi'$ is surjective, we get that
	$X^{\pi(\vec{z}_1)}$, \dots, $X^{\pi(\vec{z}_{\ell})}$
	are generators of $\mathcal{H}_{\vec{b}}$ as an $\mathcal{H}_{\vec{0}}$-module.  The
	isomorphism and the correlation between the generators is visualized
	in the commutative diagrams in Figure~\ref{figure:comm-diagrams}.
	\begin{figure}[h]
		\centering
		\begin{tikzpicture}
		\input{figure-diagram-proof-generators}
		\end{tikzpicture}
		\caption{Depiction of the argument in the proof of
			Proposition~\ref{prop:generators-of-U} how
			$\SO$-generators of
			$\Sb$ are mapped to
			$\mathcal{H}_{\vec{0}}$-generators of $\mathcal{H}_{\vec{b}}$ via
			$\SO/L$-generators of
			$\Sb/L\Sb$ .}
		\label{figure:comm-diagrams}
	\end{figure}

	This completes the proof since $\deg(X^{\pi(\vec{z}_i)}) = \pi_0(\vec{z}_i)\le \sigma$ holds for all $i\in\{1,\dots,\ell\}$.
\end{proof}

\begin{remark}
	The special case of Proposition~\ref{prop:generators-of-U} applied to $H_{\vec{0},n}=I^n$ and $H_{\vec{b},n}=\sat[I^n]\cap I^{n-1}$ is already proven by Lê Tuân Hoa~\cite[Lemmata~3.3 and~3.4, Proposition~3.1]{Hoa:2006:stab-assprimes-monom}.
\end{remark}

\subsection{Homogeneous elements of the factor module \texorpdfstring{$\mathcal{H}_{\vec{b}}/\mathcal{H}_{\vec{0}}$}{}}
The ring $\mathcal{H}_{\vec{0}}$ is an $\mathcal{H}_{\vec{0}}$-submodule of $\mathcal{H}_{\vec{b}}$.
A straight-forward verification yields
\begin{equation*}
\mathcal{H}_{\vec{b}}/\mathcal{H}_{\vec{0}}\simeq\bigoplus_{n\ge0}(H_{\vec{b},n}/H_{\vec{0},n}).
\end{equation*}
This is a graded $\mathcal{H}_{\vec{0}}$-module with scalar multiplication
\begin{align*}
h_n\cdot(u_m+H_{\vec{0},m})\coloneqq h_nu_m+H_{\vec{0},m+n}
\end{align*}
for $h_n\in H_{\vec{0},n}$ and $u_m\in H_{\vec{b},m}$.
Again, the maximal degree of the generators of $\mathcal{H}_{\vec{b}}/\mathcal{H}_{\vec{0}}$ is bounded by the value $\sigma$ given in Proposition~\ref{prop:generators-of-U}
because the generators of $\mathcal{H}_{\vec{b}}$ as an $\mathcal{H}_{\vec{0}}$-module map to generators of $\mathcal{H}_{\vec{b}}/\mathcal{H}_{\vec{0}}$ under the projection modulo $\mathcal{H}_{\vec{0}}$.

\begin{proposition}\label{prop:U_n/H_n=0}
	Let $A\vec{x}\le \vec{b}$ be as in Convention~\ref{convention:system} such that the corresponding solution spaces fulfill
	$H_{\vec{0},m}H_{\vec{0},n}=H_{\vec{0},m+n}$ for all $n$, $m\in\N_0$.
	
	Then the following property holds:
	If $H_{\vec{b},n}/H_{\vec{0},n}=0$ for some $n\ge \sigma=\Delta(A\mid \vec{b})(\nu+1)$, then $H_{\vec{b},N}/H_{\vec{0},N}=0$ for all $N\ge n$.
\end{proposition}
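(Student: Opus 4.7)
The plan is to show $U_N\subseteq H_N$ for every $N\ge n$; the reverse inclusion $H_N\subseteq U_N$ holds for free because $\vec{b}\ge\vec{0}$ gives $S\subseteq S_{\vec{b}}$. Fix $N\ge n$ and let $u_N\in U_N$ be an arbitrary monomial spanning element. By Proposition~\ref{prop:generators-of-U}, $\mathcal{U}$ is generated as an $\mathcal{H}$-module by homogeneous elements of degree at most $\sigma$. Since $n\ge\sigma$, we may write
\begin{equation*}
  u_N=\sum_i h^{(i)}_{N-d_i}\,u^{(i)}_{d_i}
\end{equation*}
with $d_i\le\sigma\le n$, $u^{(i)}_{d_i}\in U_{d_i}$, and $h^{(i)}_{N-d_i}\in H_{N-d_i}$.

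Next I would exploit the hypothesis $H_{n_1}H_{n_2}=H_{n_1+n_2}$. The decomposition $N-d_i=(n-d_i)+(N-n)$ has non-negative summands, so each factor $h^{(i)}_{N-d_i}$ can be written as a finite sum of products $h'_{n-d_i}\,h''_{N-n}$ with $h'_{n-d_i}\in H_{n-d_i}$ and $h''_{N-n}\in H_{N-n}$. Substituting back expresses $u_N$ as a $K$-linear combination of terms of the shape $h''_{N-n}\,(h'_{n-d_i}\,u^{(i)}_{d_i})$.

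The crucial observation is that the inner factor satisfies $h'_{n-d_i}\,u^{(i)}_{d_i}\in H_{n-d_i}U_{d_i}\subseteq U_n$, since $\mathcal{U}$ is a graded $\mathcal{H}$-module. By the vanishing hypothesis $U_n/H_n=0$, i.e.\ $U_n=H_n$, this inner factor actually lies in $H_n$. Multiplying by $h''_{N-n}\in H_{N-n}$ and using $H_{N-n}H_n=H_N$ gives an element of $H_N$. Summing, $u_N\in H_N$, hence $U_N\subseteq H_N$ and $U_N/H_N=0$, as required.

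The whole argument is essentially an interplay of three inputs: the degree bound $\sigma$ on the generators of $\mathcal{U}$ over $\mathcal{H}$, the multiplicativity $H_{n_1}H_{n_2}=H_{n_1+n_2}$, and the module structure $H_mU_k\subseteq U_{m+k}$. The only point that requires care is ensuring that the shift $n-d_i$ is non-negative so that the splitting makes sense, which is precisely where the hypothesis $n\ge\sigma$ enters. Beyond that bookkeeping I do not anticipate a genuine obstacle.
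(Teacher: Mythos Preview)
Your proof is correct and follows essentially the same approach as the paper's: both use the degree bound from Proposition~\ref{prop:generators-of-U}, the factorization hypothesis $H_{n_1}H_{n_2}=H_{n_1+n_2}$, and the module inclusion $H_mU_k\subseteq U_{m+k}$ to push a generator through $U_n=H_n$. The only cosmetic difference is that the paper argues by induction (showing $U_{n+1}/H_{n+1}=0$ via the split $H_m=H_1H_{m-1}$), whereas you go directly to arbitrary $N\ge n$ via the split $H_{N-d_i}=H_{n-d_i}H_{N-n}$; the underlying idea is the same.
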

\begin{proof}
	It suffices to prove that $H_{\vec{b},n+1}/H_{\vec{0},n+1}=0$. Recall that the $\mathcal{H}_{\vec{0}}$-module  $\mathcal{H}_{\vec{b}}$ is generated by elements with degree at most $\sigma$ according to Proposition~\ref{prop:generators-of-U}.
	Thus, the homogeneous elements in $H_{\vec{b},n+1}/H_{\vec{0},n+1}$ are of the form $h_m(u_k+H_{\vec{0},k})=h_mu_k+H_{\vec{0},m+k}$, where $k+m=n+1$, $k\le \sigma$,
        $h_m\in H_{\vec{0},m}$, and $u_k\in H_{\vec{b},k}$.
	Since $h_m\in H_{\vec{0,m}}=H_{\vec{0},1}H_{\vec{0},m-1}$ we can write $h_m=ab$, where $a\in H_{\vec{0},1}$ and $b\in H_{\vec{0},m-1}$.
	Then
	\begin{equation*}
	h_mu_k+H_{\vec{0},m+k}=a\cdot bu_k
	+H_{\vec{0},n+1}=0,
	\end{equation*}
	since $bu_k\in H_{\vec{b},n}=H_{\vec{0},n}$ and $aH_{\vec{0},n}\subseteq H_{\vec{0},n+1}$.
\end{proof}

\section{An upper bound for the copersistence index \texorpdfstring{$\cpi{I}$}{}}\label{section:upper-bound}
In this section we combine the results of the previous two sections to obtain an upper bound for the copersistence index $\cpi{I}$, introduced in Definition~\ref{def:cpi-persindex}.
\begin{definition}\label{definition:B1(m)}
	Let $I \subseteq K[X_1, \dots, X_r]$ be a monomial ideal and $M\subseteq[r]$. Then we denote by $\cpi{I}(M)\in\N$ the smallest number such that the following statement holds:
	If for some $N\ge\cpi{I}(M)$ the prime ideal $\fp (M)$ is not associated to $I^N$, then it follows that for all $n\ge N$
	\begin{equation*}
	\fp (M) \notin\Ass(R/I^{n}).
	\end{equation*}
\end{definition}

\begin{remark}\label{remark:bound-min-rs}
	It follows from the definition that $\cpi{I} = \max\left\{ \cpi{I}(M) \mid M\subseteq [r]  \right\}$. We argue that in fact
        \begin{equation*}
        \cpi{I} = \max\left\{ \cpi{I}(M) \longmid M\subseteq[r]\text{ and }|M|\le s \right\}
        \end{equation*}
        where $s$ is the number of generators of $I$.

	Indeed, if $s<\lvert M\rvert \le r$ then $\fm_{R_M}\notin \Ass(R_M/I_M^n)$ for all $n$ by Fact~\ref{fact:more-generators-than-variables} since $R_M$ is a polynomial ring in $\lvert M\rvert$ variables (Remark~\ref{remark:localization}) and the ideal $I_M$ has at most $s$ generators. By Observation~\ref{observation:p(M)} it follows that $\fp(M)\notin \Ass(R/I^n)$ for all $n$, i.e., $\cpi{I}(M)=1$.
	This proves the claim.
\end{remark}

\begin{theorem}\label{theorem:bounds-sigma(m)}
	
\end{theorem}
\begin{proof}
	This theorem is a special case of Proposition~\ref{prop:U_n/H_n=0}, where the system matrix $A$ and the right-hand side $\vec{b}$ are chosen such that the conditions~\eqref{condition:m-ass-equivalence} and \eqref{condition:HnHm} are satisfied.
\end{proof}

The obtained bound from Theorem~\ref{theorem:bounds-sigma(m)} depends on the system $A\vec{x}\le\vec{b}$. The next step is the following: We focus on $\fm$ rather than on all prime ideals $\fp(M)$, having Observation~\ref{observation:p(M)} in mind. 
For certain systems it is then possible to further estimate $\cpi{I}([r])$ by a new bound $\sigma(d,s,r)$ that only depends on the number of variables $r$, the number $s$ of generators of $I$ and their maximal total degree $d$. As the next lemma states, whenever such a function $\sigma$ exists that is non-decreasing in all variables, this yields a bound for the copersistence index.

\begin{proposition}\label{proposition:sigma}
	
\end{proposition}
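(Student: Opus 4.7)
The plan is to reduce the bound on $\cpi{I}$, which a priori concerns every associated prime $\fp(M)$ with $M\subseteq[r]$, to the case of the maximal ideal after a suitable localization, and then exploit the monotonicity of $\sigma$.

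First, by Remark~\ref{remark:cpi-max} it suffices to establish $\cpi{I}(M)\le \sigma(d,s,r)$ for every $M\subseteq[r]$. Fix such an $M$ and set $k\coloneqq\lvert M\rvert$. Using that localization commutes with taking powers (so $(I^n)_M = I_M^n$), Observation~\ref{observation:p(M)} applied to each power $I^n$ yields the equivalence
\begin{equation*}
  \fp(M)\in\Ass(R/I^n)\quad\Longleftrightarrow\quad \fm_{R_M}\in\Ass(R_M/I_M^n)
\end{equation*}
for every $n\in\N$. Comparing this, for $n$ and $n+1$, against Definitions~\ref{def:cpi-persindex} and~\ref{definition:B1(m)} shows that $\cpi{I}(M)=\cpi{I_M}([k])$, where the right-hand side is taken in the polynomial ring $R_M=K'[X_i\mid i\in M]$ in $k$ variables.

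Next, I would analyze the generators of $I_M$. By Remark~\ref{remark:reduction-of-variables} (applied iteratively, localizing instead of saturating, but with the same effect on generators) the minimal generating set of $I_M$ is obtained from that of $I$ by formally replacing every $X_j$ with $j\notin M$ by $1$. This operation can only decrease the number of minimal generators and the maximal total degree; hence $I_M$ has some $s'\le s$ minimal generators, each of total degree at most some $d'\le d$. The assumption on $\sigma$, which is phrased uniformly over all base fields and thus applies equally to the ideal $I_M\subseteq R_M=K'[X_i\mid i\in M]$, gives $\cpi{I_M}([k])\le \sigma(d',s',k)$. The monotonicity of $\sigma$ in all three arguments then yields
\begin{equation*}
  \cpi{I}(M)=\cpi{I_M}([k])\le \sigma(d',s',k)\le \sigma(d,s,r).
\end{equation*}
Taking the maximum over $M\subseteq[r]$ and invoking Remark~\ref{remark:cpi-max} once more completes the argument.

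There is no serious obstacle; the mildly delicate point is the bookkeeping in the second paragraph, namely verifying that the hypothesis of the proposition is applicable to the localized ideal $I_M$ in $R_M$. This is unproblematic once one records that the residue field $K'$ of $R_M$ is still a field and that the convention on the base field in the statement of the proposition is silent about which field is used, so the bound $\sigma$ genuinely applies to every monomial ideal in a polynomial ring over any field, including $R_M$.
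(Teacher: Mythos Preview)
Your proof is correct and follows essentially the same route as the paper's: localize to $R_M$ via Observation~\ref{observation:p(M)}, observe that the localized ideal has at most $s$ generators of maximal total degree at most $d$ in a polynomial ring with $|M|\le r$ variables, apply the hypothesis to bound $\cpi{I_M}(M)$, and then invoke the monotonicity of $\sigma$ together with Remark~\ref{remark:cpi-max}. Your added commentary about the base field $K'$ and the explicit identification $(I^n)_M=I_M^n$ merely makes explicit what the paper leaves tacit.
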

\begin{proof}
	Let $M \subseteq [r]$ and $I_M$ the ideal generated by $I$ in the localization $R_M$ of $R$ at $\fp(M)$. Further, let $\tilde s$ the number of minimal generators of $I_M$ and $\tilde d$ be their maximal total degree. Then $\tilde d \le d$ and $\tilde s \le s$. Moreover, $I_M$ is an ideal in $R_M$ which is a polynomial ring in $|M|\le r$ variables. We can conclude that  
	\begin{equation*}
		\cpi{I}(M) = \cpi{I_M}(M) \le \sigma(\tilde d, \tilde s, |M|) \le \sigma(d,s,r),
	\end{equation*}
	where the leftmost equality is due to Observation~\ref{observation:p(M)} and the middle and rightmost inequality follow from the hypotheses of the proposition.
	Since $\cpi{I} = \max\!\left\{\cpi{I}(M) \mid M\subseteq [r]\right\}$, this finishes the proof.
\end{proof}

In order to find a suitable function $\sigma$ to bound $\cpi{I}([r])$, we set up suitable systems  $A\vec{x} \le \vec{b}$ for Theorem~\ref{theorem:bounds-sigma(m)} to be applicable with $M=[r]$. 
In Remark~\ref{remark:characterization-m-associated} we gave three statements that characterize $\fm\in\Ass(R/I^n)$.
They are of the form
\begin{equation*}
\fm\in\Ass(R/I^n)\quad\Longleftrightarrow\quad U_n/I^n\neq0
\end{equation*}
where $U_n \in \left\{\sat[I^n]\cap I^{n-1}, I^n : \fm, \sat[I^n] \right\}$. The resulting bounds are discussed below in Subsections~\ref{subsec:Un=JnI^n-1},~\ref{subsec:Un=I:m}, and~\ref{subsec:Un=Jn}. We point out that even for a fixed choice of $U_n$ there are in general multiple options to set up a suitable system $A\vec{x} \le \vec{b}$. We restrict our investigation to specific choices.

\begin{notation}\label{notation:X_r+1=t}
In contrast to Section~\ref{sec:bounds}, we write $X=(X_1,\dots,X_r)$ and $t$ instead of $X_{r+1}$ to distinguish notationally between the variables of the ambient ring $K[X_1,\dots,X_r]$ of the ideal~$I$ and the variable $t$ we use for the grading of $\mathcal{H}_{\vec{b}}$. 	
\end{notation}

\subsection{Approach 1\texorpdfstring{: $\fm\in\Ass(R/I^n)$ if and only if $(\sat[I^n]\cap I^{n-1})\neq I^n$}{}}\label{subsec:Un=JnI^n-1}
As mentioned in Remark~\ref{remark:bound-Hoa}, this is the approach followed by Lê~Tuân~Hoa~\cite{Hoa:2006:stab-assprimes-monom}. He sets up a system $A\vec{x}\le\vec{b}$ satisfying Convention~\ref{convention:system} and then determines a bound for $\Delta(A \mid \vec{b})$. For more details, we refer to the original proofs.
His argument yields the upper bound for the copersistence index $\cpi{I}$
	\begin{equation*}
	\sigma_1(d,s,r)\coloneqq d(rs+s+d)(\sqrt{r})^{r+1}(\sqrt{2}d)^{(r+1)(s-1)},
	\end{equation*}
        where $r$ is the number of variables, $s$ is the number of generators of $I$ and $d$ is their maximal total degree.
Lemma~\ref{lemma:verschieben} and Remark~\ref{remark:bound-min-rs} further imply
	\begin{equation*}
		\cpi{I}\le \sigma_1(\dred,s,\min\{r,s\}).
	\end{equation*}

\subsection{Approach 2\texorpdfstring{: $\fm\in\Ass(R/I^n)$ if and only if $(I^n:\fm)\neq I^n$}{}}\label{subsec:Un=I:m}
We set up a system of linear constraints $A\vec{x}\le \vec{b}$ such that $H_{\vec{b},n}=(I^n:\fm) t^n$ and later show that $H_{\vec{0},n}=I^nt^n$ (in the notation of Section~\ref{sec:bounds} with Notation~\ref{notation:X_r+1=t}).
A first idea how to set up such a system was introduced in Example~\ref{example:I^n}.

If $I=(X^{\veca_1},\dots, X^{\veca_s})$, then $I^n=(X^{k_1\veca_1+\dots+k_s\veca_s}\mid k_i\in\N_0,\, k_1+\dots+k_s=n)$.
A monomial $X^{\vec{h}}$ is an element of $I^n:\fm$ if and only if $X^{\vec{h}}X_i\in I^n$ for all $i\in[r]$, i.e., there exists a generator of $I^n$ that divides $X^{\vec{h}}X_i$.
That is, for every $i\in[r]$ there exist $k_{i,1},$ \dots, $k_{i,s}\in\N_0$ such that
$k_{i,1}+\dots+k_{i,s}=n$ and
\begin{align*}
X^{k_{i1}\veca_1+\dots+k_{is}\veca_s}\mid X^{\vec{h}+\vec{e}_i},
\end{align*}
where $\vec{e}_i\in\Z^r$ is the $i$-th unit vector.
This is equivalent to the componentwise inequality
\begin{align*}
k_{i,1}\veca_1+\dots+k_{i,s}\veca_s\le \vec{h}+\vec{e}_i.
\end{align*}
It suffices to demand that $k_{i,1}+\dots+k_{i,s}\ge n$, since $I^m\subseteq I^n$ holds for all $m\ge n$.

In conclusion:
A monomial $X^{\vec{h}}$ is an element of $I^n:\fm$ if and only if for every $i\in[r]$ there exist  $k_{i,1},\dots, k_{i,s}\in\N_0$ such that
\begin{align*}
k_{i,1}\veca_1+\dots+k_{i,s}\veca_s&\le \vec{h}+\vec{e}_i,\\
n-(k_{i,1}+\dots+k_{i,s})&\le 0.
\end{align*}
So for every $i\in[r]$ we get a block of inequalities:

\begin{center}
  \begin{tikzpicture}[mymatrixenv]
  \matrix[mymatrix] (m)  {
    {}         & {}        &   &      & {}        &{}  &                   &  &{0} \\
               &           &   &      &           &    &                   &  &    \\
    {\vec{a}_1}&{\vec{a}_2}&   &      &{\vec{a}_s}&    &|[scale=1.5]|{-I_r}&  &    \\
               &           &   &      &           &    &                   &  &    \\
    {}         &           &   &      &           &    &                   &  &    \\
    {}         & {}        &   &      & {}        &{}  &                   &{}&{0}\\                
    {-1}       & -1        &   &      & -1        &{0} &                   &0 &{1} \\
  };    
  \matrix[mymatrix, right =0.6cm of m] (m1)  {
    {k_{i,1}}  \\
    \vdots\\
    {k_{i,s}}  \\
    {h_{1}}  \\
    \vdots\\
    {h_{r}}  \\
    {n}       \\
  };    

  \node[right =0.2 of m1] {$\le$};
  \matrix[mymatrix, right =1cm of m1] (m2)  {
    \vec{e}_i  \\
    0  \\
  };

  \draw[dotted,shorten <=0.2cm,shorten >=0.2cm] (m-1-9.south) -- (m-6-9.north);
  \draw[dotted,shorten <=0.2cm,shorten >=0.2cm] (m-7-6.east) -- (m-7-8.west);
  \draw[dotted,shorten <=0.3cm,shorten >=0.3cm] (m-3-2.east) -- (m-3-5.west);
  \draw[dotted,shorten <=0.3cm,shorten >=0.3cm] (m-7-2.east) -- (m-7-5.west);

  \draw[ForestGreen, rounded corners] ($(m1-1-1.north)+(-1em,0.2em)$) rectangle ($(m1-3-1.south)+(1em,-0.1em)$);
  \draw[ForestGreen, rounded corners] ($(m-1-1.north)+(-1em,0.9em)$) rectangle ($(m-7-5)+(0.9em,-0.5em)$);

  \draw[RedViolet, rounded corners] ($(m-1-6.north)+(-0.5em,0.9em)$) rectangle ($(m-7-8.south)+(0.4em,-0.18em)$);
  \draw[RedViolet, rounded corners] ($(m1-4-1.north)+(-1em,0.1em)$) rectangle ($(m1-6-1.south)+(1em,-0.05em)$);

  \draw[RoyalPurple, rounded corners] ($(m-1-9.north)+(-0.7em,0.3em)$) rectangle ($(m-7-9.south)+(0.7em,-0.18em)$);
  \draw[RoyalPurple, rounded corners] ($(m1-7-1.north)+(-1em,0.25em)$) rectangle ($(m1-7-1.south)+(1em,-0.3em)$);

\end{tikzpicture}

\end{center}

We now combine these blocks to obtain a matrix representation of the system of inequalities.
\begin{notation}\label{notation:Ab}
  Let $m= (r+1)r$ and $\nu=rs+r+1$.  We define the matrix
  $\indi{A} \in\Z^{m+\nu\times \nu}$ and the vector $\indi{\vec{b}} \in\Z^{m+\nu}$ in the following way:

  \begin{center}
    \begin{tikzpicture}[mymatrixenv]
  \matrix[mymatrix] (m)  {
    |[alias=b1lo]|{}   &|[alias=vb1o]|{}&|[alias=b1ro]|{}   & {}                & & {}                &                               &  {}               &&  {}               &|[alias=ro1]| {}     & |[alias=ror1]|{}\\
                       &                &                   &                   & &                   &                               &                   &&                   &                     &   \\
    \veca_1            &                & \veca_s           &                   & &                   &                               &                   &&                   &|[scale=1.5]|{-I_r}  &   \\
                       &                &                   &                   & &                   &                               &                   &&                   &                     &   \\[0.2cm]
    |[alias=m1ul1]|{-1}&                &|[alias=m1ur1]|{-1}&                   & &                   &                               &                   &&                   & 0 \hspace*{0.8cm}0  &|[alias=r1]|{1}\\
    {}                 &                &                   &|[alias=b2lo]|{}   & &|[alias=b2ro]|{}   &                               &                   &&                   &|[alias=ro2]|        & |[alias=ror2]|{}\\
                       &                &                   &                   & &                   &                               &                   &&                   &                     &   \\
                       &                &                   &\veca_1            & &  \veca_s          &                               &                   &&                   &|[scale=1.5]|{-I_r}  &   \\
                       &                &                   &                   & &                   &                               &                   &&                   &                     &   \\[0.2cm]
    {}                 &                &                   &|[alias=m1ul2]|{-1}& &|[alias=m1ur2]|{-1}&                               &                   &&                   &0 \hspace*{0.8cm}0   &|[alias=r2]|{1}\\
                       &                &                   &                   & &                   &|[scale=1.5]|{\phantom{\ddots}}&                   &&                   \\                    
     {}                &                &                   &                   & &                   &                               &|[alias=b3lo]|{}   &&|[alias=b3ro]|{}   &|[alias=ro3]|        & |[alias=ror3]|{}\\
                       &                &                   &                   & &                   &                               &                   &&                   &                     &   \\
                       &|[scale=1.5]|{ }&                   &                   & &                   &                               & \veca_1           && \veca_s           &|[scale=1.5]|{-I_r}  &   \\
                       &                &                   &                   & &                   &                               &                   &&                   &                     &   \\[0.2cm]
     {}                &                &                   &                   & &                   &                               &|[alias=m1ul3]|{-1}&&|[alias=m1ur3]|{-1}& 0 \hspace*{0.8cm}0  &|[alias=r3]|{1}\\
  };

  \node at (-5.9,0) {$\indi{A} \coloneqq$};
   \mymatrixbraceright{1}{5}{\footnotesize $r+1$}
   \mymatrixbraceright{6}{10}{\footnotesize $r+1$}
   \mymatrixbraceright{12}{16}{\footnotesize $r+1$}

   \mymatrixbracetop{1}{3}{\footnotesize $s$}
   \mymatrixbracetop{4}{6}{\footnotesize $s$}
   \mymatrixbracetop{8}{10}{\footnotesize $s$}
   \mymatrixbracetopalt{11}{12}{\footnotesize $r+1$}
 
    \draw[dotted,shorten <=0.2cm,shorten >=0.2cm] (m1ul1.east) -- (m1ur1.west);
    \draw[dotted,shorten <=0.2cm,shorten >=0.2cm] (m1ul2.east) -- (m1ur2.west);
    \draw[dotted,shorten <=0.2cm,shorten >=0.2cm] (m-3-1.east) -- (m-3-3.west);
    \draw[dotted,shorten <=0.2cm,shorten >=0.2cm] (m-3-1.east) -- (m-3-3.west);
    \draw[dotted,shorten <=0.2cm,shorten >=0.2cm] (m-8-4.east) -- (m-8-6.west);
    \draw[dotted,shorten <=0.2cm,shorten >=0.2cm] (m-14-8.east) -- (m-14-10.west);
    \draw[dotted,shorten <=0.2cm,shorten >=0.2cm] (m-16-8.east) -- (m-16-10.west);
    \draw[dotted,shorten <=0.4cm,shorten >=0.4cm] (m-5-11.west) -- (m-5-11.east);
    \draw[dotted,shorten <=0.4cm,shorten >=0.4cm] (m-10-11.west) -- (m-10-11.east);
    \draw[dotted,shorten <=0.4cm,shorten >=0.4cm] (m-16-11.west) -- (m-16-11.east);

    \node[draw, rectangle, minimum width=0.4em, minimum height=0.4em, ForestGreen, rounded corners=1pt, inner sep=0] at ($(m1ur2.south east)+(1em,-1.1em)$) {};
    \node[draw, rectangle, minimum width=0.4em, minimum height=0.4em, ForestGreen, rounded corners=1pt, inner sep=0] at ($(m1ur2.south east)+(1.8em,-1.75em)$){} ;
    \node[draw, rectangle, minimum width=0.4em, minimum height=0.4em, ForestGreen, rounded corners=1pt, inner sep=0] at ($(m1ur2.south east)+(2.6em,-2.4em)$){} ;

    \node[draw, rectangle, minimum width=0.4em, minimum height=0.4em, RedViolet, rounded corners=1pt, inner sep=0] at ($(r2.south)+(-2.7em,-0.9em)$) {};
    \node[draw, rectangle, minimum width=0.4em, minimum height=0.4em, RedViolet, rounded corners=1pt, inner sep=0] at ($(r2.south)+(-2.7em,-1.75em)$) {};
    \node[draw, rectangle, minimum width=0.4em, minimum height=0.4em, RedViolet, rounded corners=1pt, inner sep=0] at ($(r2.south)+(-2.7em,-2.6em)$) {};

    \draw[ForestGreen, rounded corners] ($(b1lo.north)+(-1em,0em)$) rectangle ($(m1ur1)+(1em,  -0.69em)$);
    \draw[ForestGreen, rounded corners] ($(b2lo.north)+(-1em,0em)$) rectangle ($(m1ur2)+(1em,  -0.69em)$);
    \draw[ForestGreen, rounded corners] ($(b3lo.north)+(-1em,0em)$) rectangle ($(m1ur3)+(0.8em,-0.69em)$);

    \draw[RedViolet, rounded corners] ($(ro1.north)+(-1.9em,0em)$) rectangle ($(r1)+(-0.9em,-0.7em)$);
    \draw[RedViolet, rounded corners] ($(ro2.north)+(-1.9em,0em)$) rectangle ($(r2)+(-0.9em,-0.7em)$);
    \draw[RedViolet, rounded corners] ($(ro3.north)+(-1.9em,0em)$) rectangle ($(r3)+(-0.9em,-0.7em)$);

    \draw[RoyalPurple, rounded corners] ($(ror1.north)+(-0.6em,0em)$) rectangle ($(r1)+(0.7em,-0.7em)$);
    \draw[RoyalPurple, rounded corners] ($(ror2.north)+(-0.6em,0em)$) rectangle ($(r2)+(0.7em,-0.7em)$);
    \draw[RoyalPurple, rounded corners] ($(ror3.north)+(-0.6em,0em)$) rectangle ($(r3)+(0.7em,-0.7em)$);

\end{tikzpicture}

  \end{center}

  \noindent where $I_r$ denotes the $r\times r$ identity matrix. Further, we set
  \begin{equation*}
    \indi{\vec{b}}\coloneqq(\vec{e}_1^{\transpose},0,\dots, \vec{e}_r^{\transpose} ,0)^{\transpose}\in\Z^{(r+1)r}.
  \end{equation*}
\end{notation}

\begin{theorem}\label{theorem:bound-B1}
	
\end{theorem}
\begin{proof}
	Let $\indi{A}$ and $\indi{\vec{b}}$ be as introduced in Notation~\ref{notation:Ab}. To simplify notation, we write $A=\indi{A}$ and $\vec{b}=\indi{\vec{b}}$ in this proof.
	As explained above, $H_{\vec{b},n}=(I^n:\fm)t^n$ holds.
	
	Considering the homogeneous system
	$A\, \vec{x}\le \vec{0}$, the conditions $k_{i,1}\veca_1+\dots+k_{i,s}\veca_s\le \vec{h}+\vec{e}_i$ change to $k_{i,1}\veca_1+\dots+k_{i,s}\veca_s\le \vec{h}$, that is, $X^{k_{i,1}\veca_1+\dots+k_{i,s}\veca_s}\mid X^{\vec{h}}$.
	Hence the homogeneous system describes the set of monomials in $I^n$, that is, $H_{\vec{0},n}=I^nt^n$.

	This system satisfies the hypotheses of Theorem~\ref{theorem:bounds-sigma(m)}.
    Note that $H_{\vec{0},n} H_{\vec{0},m}= H_{\vec{0},n+m}$ holds trivially for all non-negative integers $n$ and $m$.
	Therefore,
	\begin{equation*}
		\cpi{I}([r])\le\Delta(A\mid\vec{b})(rs+r+2)
	\end{equation*}
	holds. 
	
	We use Hadamard's inequality to give an upper bound for $\Delta(A\mid\vec{b})$.
	The norms of the first $rs$ columns of $A$ are at most
	\begin{align*}
	\max_{i\in[s]}\sqrt{a_{i,1}^2+\dots+a_{i,r}^2+1}\le \sqrt{d^2+1}.
	\end{align*}
	The remaining $r+1$ columns of $A$ and $\vec{b}$ have norm $\sqrt{r}$.
	Therefore, 
	\begin{equation*}
		\cpi{I}([r])\le\Delta(A\mid \vec{b})(rs+r+2)\le (\sqrt{d^2+1})^{rs}(\sqrt{r})^{r+2}(rs+r+2)=\sigma_2(d,s,r).
	\end{equation*}
	A straight forward verification shows that $\sigma_2$ is non-decreasing in all three parameters. Hence we can apply Proposition~\ref{proposition:sigma} and obtain $\cpi{I}\le\sigma_2(d,s,r)$.
	The assertion follows from Lemma~\ref{lemma:verschieben} and Remark~\ref{remark:bound-min-rs}.
\end{proof}

\subsection{Approach 3\texorpdfstring{: $\fm\in\Ass(R/I^n)$ if and only if $\sat[I^n]\neq I^n$}{}}\label{subsec:Un=Jn}
Not too surprisingly, this approach turns out to be very similar to the one we presented in Section~\ref{subsec:Un=I:m}.
Indeed, the augmented system matrices are almost identical. The resulting bound for $\cpi{I}$ is greater than $\sigma_{2}(\dred,s,\min\{r,s\})$ of the previous subsection.
However, we briefly describe this approach here to demonstrate that there are several options to construct a system of linear inequalities that is suitable for Theorem~\ref{theorem:bounds-sigma(m)}.

By definition, $\sat[I^n]=\bigcup_{k\in\N_0}I^n:\fm^k$.
As an increasing sequence of ideals in the Noetherian ring $R$, 
the sequence $I^n:\fm^0\subseteq I^n:\fm^1\subseteq I^n:\fm^2\subseteq\cdots$ becomes stationary at some power $N\in\N$ of $\fm$.
Hence, $\sat[I^n]=\bigcup_{k=0}^N I^n:\fm^k$.
We will see in Remark~\ref{remark:A-1-in-A-2} below that the precise value of $N$ is not relevant in what follows.
By Remark~\ref{remark:sat(I)=intersection} we have $\sat[I^n]=\bigcap_{i=1}^r(I^n:X_i^{\infty})$ which implies
\begin{equation*}
	\sat[I^n]=\bigcup_{(k_1,\dots,k_r)\in\N_0^r} I^n:(X_1^{k_1},\dots,X_r^{k_r}) \supseteq \bigcup_{(k_1,\dots,k_r)\in\N_0^r,\, k_i\le N} I^n:(X_1^{k_1},\dots,X_r^{k_r}).
\end{equation*}
The reverse inclusion also holds, since if $w\in\sat[I^n]$, then there exists a $k\le N$ such that $w\in I^n:\fm^k$.
As $(X_1^k,X_2^k,\dots, X_r^k)\subseteq \fm^k$, this implies that $w\in I^n:(X_1^k,X_2^k,\dots, X_r^k)$.
We conclude
\begin{align*}
	X^{\vec{h}}\in \sat[I^n] &\Longleftrightarrow\text{ there exist }k_1,\dots,k_r\le N\text{ such that for all }i\in[r], \text{ we have }X^{\vec{h}}X_i^{k_i}\in I^n\\
	&\Longleftrightarrow X^{\vec{h}}X_i^{N}\in I^n \text{ for all }i\in[r].
\end{align*}

This is equivalent to the existence of $k_{i,1},$ \dots, $k_{i,s}\ge 0$  for all $i\in[r]$ with
\begin{align*}
-(k_{i,1}+\dots+k_{i,s})+n &\le 0,\\
k_{i,1}\veca_1+\dots+k_{i,s}\veca_s&\le \vec{h}+N\vec{e}_i.
\end{align*}

\begin{notation}	
  Let $A_{I^n:\fm}$ and $\vec{b}_{I^n:\fm}$ be as in Notation~\ref{notation:Ab}. Then we set
  \begin{equation*}
  	A_{\sat[I^n]}\coloneqq A_{I^n:\fm}\text{ and }\vec{b}_{\sat[I^n]}\coloneqq N\cdot\vec{b}_{I^n:\fm}.
  \end{equation*}
\end{notation}

\begin{remark}\label{remark:A-1-in-A-2}
	By construction, $H_{\vec{b},n}=\sat[I^n]t^n$ holds for the system $\indii{A}\,\vec{x}\le \indii{\vec{b}}$. The same argument as in Subsection~\ref{subsec:Un=I:m} yields $H_{\vec{0},n}=I^nt ^n$.
\end{remark}	

\begin{remark}\label{remark:sigma_2smaller}
	Since $\Delta(\indii{A}\mid \indii{\vec{b}})=\Delta(\indi{A}\mid N\cdot\indi{\vec{b}})\ge \Delta(\indi{A}\mid \indi{\vec{b}})$ holds, using the system $\indii{A}\,\vec{x}\le \indii{\vec{b}}$ in Theorem~\ref{theorem:bounds-sigma(m)} and the same technique as in Theorem~\ref{theorem:bound-B1} does not improve the upper bound for $\cpi{I}$ obtained in Theorem~\ref{theorem:bound-B1}.
\end{remark}
\begin{remark} 
	As pointed out earlier, there may be more than one choice to set up a system matrix. 
	Another idea was to use that $\sat[I^n]=(I^n:X_1^{\infty})\cap\dots\cap (I^n:X_r^{\infty})$. However, the corresponding system is already homogeneous and hence Theorem~\ref{theorem:bounds-sigma(m)} is not applicable.
\end{remark}

\section{Comparison of the different approaches}\label{section:compare-bounds}
 We already established that our approach in Subsection~\ref{subsec:Un=Jn} does not result in a better bound than $\sigma_2(\dred, s, \min\{r,s\})$. It remains to compare the bounds from Subsections~\ref{subsec:Un=JnI^n-1} and~\ref{subsec:Un=I:m}.

\begin{proposition}\label{thm:compare-bounds}
  
\end{proposition}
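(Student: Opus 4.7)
The plan is to reduce both inequalities to elementary arithmetic statements by first absorbing the factor $q(d)^{rs}$ into the $(\sqrt{d^2+1})^{rs}$ term of $\sigma_2$. Since
\begin{equation*}
  q(d)\cdot\sqrt{d^2+1}=d\sqrt{2},
\end{equation*}
the middle expression simplifies to
\begin{equation*}
  \frac{q(d)^{rs}}{\sqrt{2r}}\,\sigma_2(d,s,r)
  =\frac{(d\sqrt{2})^{rs}}{\sqrt{2}}\,(\sqrt{r})^{r+1}(rs+r+2),
\end{equation*}
using $\sqrt{r}^{\,r+2}/\sqrt{r}=\sqrt{r}^{\,r+1}$. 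This single identity is what makes both bounds directly comparable to $\sigma_1$, which already carries $(\sqrt{r})^{r+1}$ and the combined factor $(\sqrt{2}d)^{(r+1)(s-1)}$.

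For the first inequality $\sigma_2<\frac{q(d)^{rs}}{\sqrt{2r}}\sigma_2$, after cancellation it suffices to show $q(d)^{rs}>\sqrt{2r}$. I would use the trivial bounds $q(d)^2=\frac{2d^2}{d^2+1}\ge \frac{8}{5}$ (since $d\ge 2$) and $rs\ge 2r$ (since $s\ge r\ge 2$) to reduce this to $(8/5)^{2r}>2r$, which I would establish by a short induction on $r\ge 2$: the base case $r=2$ gives $(8/5)^4=4096/625>4$, and the inductive step works because $(8/5)^2=64/25>25/24\cdot(1+1/r)$ for $r\ge 2$.

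For the second inequality, after substituting the simplified middle expression and cancelling the common factor $(\sqrt{r})^{r+1}$, the claim becomes
\begin{equation*}
  (rs+r+2)\le \sqrt{2}\,d\,(rs+s+d)\,(\sqrt{2}d)^{(r+1)(s-1)-rs}
  =(rs+s+d)\,(\sqrt{2}d)^{s-r},
\end{equation*}
where I used $(r+1)(s-1)-rs=s-r-1$ and absorbed the factor $\sqrt{2}d$. From the hypotheses $s\ge r$ and $d\ge 2$, I get immediately $rs+s+d\ge rs+r+2$, and $(\sqrt{2}d)^{s-r}\ge 1$ since $s-r\ge 0$. Combining these finishes the proof.

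The only minor subtlety I expect is tracking the exponent $(r+1)(s-1)-rs$ correctly; in particular, the edge case $s=r$ produces exponent $-1$ in the unrewritten form, which is why I prefer to rewrite $\sqrt{2}d\cdot(\sqrt{2}d)^{s-r-1}$ as $(\sqrt{2}d)^{s-r}$ before case analysis. Otherwise the argument is essentially a careful bookkeeping exercise with no real obstacle.
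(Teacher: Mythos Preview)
Your proof is correct and follows essentially the same approach as the paper. Both arguments hinge on the identity $q(d)\sqrt{d^2+1}=d\sqrt{2}$, the estimate $rs+s+d\ge rs+r+2$, and the observation $(\sqrt{2}d)^{s-r}\ge 1$; the only cosmetic difference is that the paper computes the ratio $\sigma_1/\sigma_2$ directly, and for the first inequality it reduces to $q(2)^{r^2}/\sqrt{2r}$ (using $s\ge r$) and checks monotonicity in $r$, whereas you reduce to $(8/5)^{2r}>2r$ (using $s\ge 2$) and induct---both are equivalent elementary verifications.
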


\begin{proof}
	Due to the hypothesis, we can estimate
	
	\begin{align*}
	\frac{\sigma_{1}(d,s,r)}{\sigma_{2}(d,s,r)}
	&=\frac{d(rs+s+d)\left(\sqrt{r}\right)^{r+1}\left(\sqrt{2}d\right)^{(r+1)(s-1)}}
	{(\sqrt{d^2+1})^{rs}\left(\sqrt{r}\right)^{r+2}(rs+\underbrace{r}_{\le s}+\underbrace{2}_{\le d})}
	\ge \frac{d^{rs+s-r}\sqrt{2}^{rs+s-r-1}}{(\sqrt{d^2+1})^{rs}\sqrt{r}}\\
	&= q(d)^{rs}\cdot \frac{d^{s-r}\sqrt{2}^{s-r}}{\sqrt{2r}}\ge \frac{q(d)^{rs}}{\sqrt{2r}}.
	\end{align*}
	This proves the second inequality in the statement of the proposition.
	For the first inequality, we show that 
	\begin{equation*}
		\frac{q(d)^{rs}}{\sqrt{2r}}>1
	\end{equation*}
	holds.
	Since $q(d)$ is increasing in $d$ and $s\ge r$, it follows that
	\begin{align*}
		\frac{q(d)^{rs}}{\sqrt{2r}}\ge \frac{q(2)^{r^2}}{\sqrt{2r}}\eqqcolon \varphi(r).
	\end{align*}
	The latter expression $\varphi(r)$ is increasing in $r$, because
	\begin{align*}
		\frac{q(2)^{r^2}}{\sqrt{2r}} < \frac{q(2)^{r^2+2r+1}}{\sqrt{2(r+1)}}\quad\Longleftrightarrow\quad r+1 < q(2)^{2(2r+1)}\cdot r,
	\end{align*}
	where the inequality on the right-hand side holds since $q(2)^{2(2r+1)}\ge q(2)^{10}=(8/5)^5>10$.
	Evaluating $\varphi(2)$ gives $64/50>1$. 
\end{proof}

\bibliographystyle{plain}
\bibliography{bibliography}
\end{document}